\documentclass[]{amsart}



\usepackage{amsthm}
\usepackage[applemac]{inputenc}
\usepackage[english]{babel}
\usepackage{etex} 
\usepackage{amsmath}
\usepackage{amsfonts}
\usepackage{amssymb}
\usepackage{wrapfig}

\usepackage{xcolor}  
\usepackage{extarrows}  
\usepackage[pdfborder={0 0 0}]{hyperref}  
\usepackage{enumitem} 
\usepackage{graphicx} 
\allowdisplaybreaks 
\usepackage{array} 
\usepackage[font=footnotesize,labelfont=bf]{caption} 
\usepackage{longtable}
\usepackage{pdfcomment} 
\usepackage{caption}
\usepackage{subcaption}
\usepackage{changes} 
\usepackage{pdfpages} 
\usepackage{datetime}
\usepackage{todonotes}
\usepackage[backend=bibtex,style=numeric]{biblatex}
\AtEveryBibitem{%
	\clearfield{issn} 
	\clearfield{doi} 
	
	\ifentrytype{online}{}{
		\clearfield{url}
	}
}


\newtheorem{thm}{Theorem}[section]
\newtheorem{prop}[thm]{Proposition}
\newtheorem{lem}[thm]{Lemma}
\newtheorem{cor}[thm]{Corollary}

\theoremstyle{definition}

\theoremstyle{remark}
\newtheorem{remark}[thm]{Remark}


\newcommand{\N}{\mathbb N}
\newcommand{\C}{\mathbb C}

\newcommand{\E}{\mathbb{E}}
\newcommand{\R}{\mathbb R}
\newcommand{\pr}{{\mathbb P}}
\newcommand{\var}{\operatorname{var}}

\newcommand{\gausspol}{G(N,\ell;z)}
\newcommand{\gausspolcoeff}{p(N,\ell,j)}

\newcommand{\verts}[1]{\vert{#1}\vert}

\newcommand{\FindStat}[1]{
	\ifx&#1&
	\url{www.findstat.org}    
	\else \url{www.findstat.org/#1}    
	\fi}


\addbibresource{bibliographyrootunitary.bib}


\begin{document}

\title[Limit theorems for polynomials]{Probabilistic limit theorems induced by\\ the zeros of polynomials}

\author{Nils Heerten}
\address{Faculty of Mathematics, Ruhr University Bochum}
\email{nils.heerten@ruhr-uni-bochum.de}
\thanks{}

\author{Holger Sambale}
\address{Faculty of Mathematics, Ruhr University Bochum}
\email{holger.sambale@ruhr-uni-bochum.de}
\thanks{}

\author{Christoph Th\"ale}
\address{Department of Mathematics, Ruhr University Bochum}
\email{christoph.thaele@ruhr-uni-bochum.de}
\thanks{}

\subjclass[2020]{05A16, 52B20, 60C05, 60F05, 60F15}

\keywords{Berry-Esseen bound, combinatorial statistic, cumulant bound, mod-Gaussian convergence, zeros of polynomials}

\date{}

\dedicatory{}

\begin{abstract}
	Sequences of discrete random variables are studied whose probability generating functions are zero-free in a sector of the complex plane around the positive real axis. Sharp bounds on the cumulants of all orders are stated, leading to Berry-Esseen bounds, moderate deviation results, concentration inequalities and mod-Gaussian convergence. In addition, an alternate proof of the cumulant bound with improved constants for a class of polynomials all of whose roots lie on the unit circle is provided. A variety of examples is discussed in detail.
\end{abstract}

\maketitle


\section{Introduction}
\label{ch:introduction}

Polynomials with non-negative coefficients are closely related to bounded $\N_0$-valued random variables, where $\N_0:=\{0,1,2,\ldots\}$, even by a one-to-one correspondence if the sum of the coefficients is assumed to be equal to $1$. Indeed, the probability generating function of any bounded $\N_0$-valued random variable is a polynomial with non-negative coefficients which sum up to $1$, and any such polynomial may be interpreted as the probability generating function of a random variable taking finitely many values all of which are non-negative integers.

It is therefore natural to search for connections between the location of the zeros of the polynomials and  asymptotic distributional properties of the corresponding random variables. For instance, if the polynomials only have real roots and thus can be written as the product of linear factors, the respective random variables can be represented as sums of independent Bernoulli variables in distribution. As a consequence, a central limit theorem is satisfied if and only if the variance tends to infinity. For a survey of this by now classical approach for proving probabilistic limit theorems, which was originally introduced by L.H.\ Harper \cite{Harper}, we refer to the article of J.\ Pitman \cite{pitman1997}.

For root-unitary polynomials, that is, polynomials all of whose roots lie on the unit circle in the complex plane, H.-K.\ Hwang and V.\ Zacharovas \cite{hwang_zacharovas2015} proved (among other results) that in typical cases, the limiting behaviour of sequences of the corresponding random variables $X_N$ is already determined by the limit of the fourth moment of the standardizations $X_N^\ast:=(X_N-\mathbb{E}X_N)/\sqrt{\var(X_N)}$. For example, they proved the remarkable fact that $X_N^\ast$ converges to the standard Gaussian random variable if and only if $\mathbb{E} (X_N^\ast)^4 \to~3$. This phenomenon can be regarded as an instant of what is known as a \textit{fourth-moment phenomenon}.

M.\ Michelen and J.\ Sahasrabudhe \cite{michelen2019Preprint, michelen2019} largely generalized this situation. They proved central limit theorems and Berry-Esseen bounds on the speed of convergence for sequences of bounded $\N_0$-valued random variables $X_N$ in terms of the variance and a quantity $\delta_N$ which depends on the ``geometry'' of the zero set of their generating functions $f_N$, which by construction are polynomials. For instance, in \cite[Theorem 1.1]{michelen2019Preprint} $\delta_N$ equals $\min_\zeta|1-\zeta|$ with the minimum running over all roots of the corresponding probability generating function $f_N$, while in Theorem 1.4 the polynomials are assumed to be zero-free in a small sector $\{z \in \C \colon |\text{arg}(z)| < \delta_N\}$, where $\text{arg}(z) \in (-\pi,\pi]$ stands for the argument of $z \in \C$.

As a by-product and technical device for their main results, in the proof of \cite[Lemma~8.1]{michelen2019Preprint} a bound on the cumulants of bounded $\N_0$-valued random variables $X_N$ is derived which in particular applies to the situation of \cite[Theorem~1.4]{michelen2019Preprint}. Writing $\sigma_N^2 := \var(X_N)$ and denoting the cumulant of order $m$ of $X_N^\ast:=(X_N-\E X_N)/\sigma_N$ by $\kappa_m(X_N^\ast)$, it says that
\begin{equation}
	\verts{\kappa_m(X_N^\ast)}\le {m!\over (c\delta_N\sigma_N)^{m-2}}
	\label{eq:introd_cum_bound_m&s}
\end{equation}
for some absolute constant $c>0$.

Cumulant bounds of this form contain a wealth of information on the asymptotic distributional properties of the random variables $X_N$, which goes far beyond of what has been studied in \cite{michelen2019Preprint}. This approach was put forward by L. Saulis and V.\,A.\ Statulevi\v{c}ius \cite{saulis1991} (see also the recent survey \cite{doring2021}). In the literature, a cumulant bound of the form
\begin{equation*}
	\verts{\kappa_m(X_N^\ast)}\le \frac{m!}{\Delta_N^{m-2}}\qquad (m \ge 3),
\end{equation*}
is known as the \emph{Statulevi\v{c}ius condition} and gives rise to a large number of strong limit theorems involving the quantity $\Delta_N$. In addition to central limit theorems, this includes Berry-Esseen bounds, moderate deviation principles and concentration inequalities. Moreover, the Statulevi\v{c}ius condition can also be used to establish mod-Gaussian convergence, a fairly new concept with far-reaching probabilistic consequences, which was introduced only within the last decade. For details about the role of cumulant bounds in mod-Gaussian convergence we refer to \cite[Chapter 5]{feray2016}. In particular, mod-Gaussian convergence leads to sharper versions of some of the bounds which can be derived by the methods established in \cite{saulis1991}. We remark that for the coefficients of real-rooted polynomials, mod-Gaussian convergence is a consequence of \cite[Theorem 8.1]{feray2016}.

In this note, we study cumulant bounds for sequences of bounded $\N_0$-valued random variables and the distributional limit theorems which follow from them together with a number of applications, mostly with a combinatorial flavour. In particular, since \eqref{eq:introd_cum_bound_m&s} is slightly hidden in the proofs of \cite{michelen2019Preprint} and requires substantial theoretical and technical background, we summarize the central arguments from \cite{michelen2019Preprint} in a short and largely self-contained proof. Another aim of this note is to provide an alternative and much more elementary proof of \eqref{eq:introd_cum_bound_m&s} for a certain class of root-unitary polynomials already appearing in \cite{hwang_zacharovas2015}, which at the same time leads to a far better absolute constant $c$ in this special situation. It is based on explicit representations of the cumulants $\kappa_m(X_N^\ast)$ of any order $m\geq 1$ for the random variables $X_N^\ast$ in \cite{hwang_zacharovas2015}.

\medspace

The organization of this note is as follows. In Section~\ref{ch:main_results} we state the cumulant bound from \cite{michelen2019Preprint} and various probabilistic limit theorems which follow as corollaries. Their theoretical background is summarized in the appendix in order to keep this paper largely self-contained. In Section~\ref{ch:special case} we consider a class of root-unitary polynomials appearing in \cite{hwang_zacharovas2015} and restate the cumulant bound in this situation with improved constants. A number of examples are discussed in Section~\ref{ch:examples}. In the last section we provide the proofs of the results from Section~\ref{ch:main_results} (Section~\ref{ch:proof_general_cumulant_bound}) and Section~\ref{ch:special case} (Section~\ref{ch:proof_of_sc_theorem}).

\section{Main results}
\label{ch:main_results}

Throughout this note, we shall consider sequences of bounded $\N_0$-valued random variables $(X_N)_N$, i.e., $X_N \in \{0,1,\ldots,n(N)\}$ for a suitable natural number $n(N)$. We denote their probability generating functions (which are polynomials by assumption) by $f_N:=f_{X_N}$ and their variances by $\sigma^2_N>0$. We further assume the generating functions $f_N$ to be zero-free in the sector
\begin{equation*}
	S(\delta_N):=\big\{z\in\C\setminus\{0\}:\arg(z)\in(-\delta_N,\delta_N)\big\}\qquad(\delta_N\in(0,\pi)),
\end{equation*}
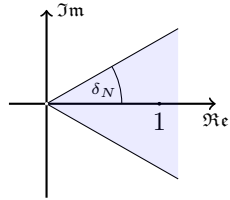
\begin{wrapfigure}{r}{0.5\textwidth}
	\centering
	\begin{tikzpicture}[scale=0.5]
		\filldraw[black] (3,0) circle (1pt) node[below]{$1$};
		\draw[->, thick] (-1,0)--(4.5,0) node[below, xshift=0]{\scriptsize $\mathfrak{Re}$}; 
		\draw[->, thick] (0,-2.5)--(0,2.5) node[right, xshift=0]{\scriptsize $\mathfrak{Im}$}; 
		\draw[-] (0,0)--(3.5,2);
		\draw[-] (0,0)--(3.5,-2);
		\draw (2,0) arc (0:29.74:2cm) node[midway,xshift=-0.2cm, yshift=-0.075cm]{{\tiny $\delta_N$}};
		\path [fill=blue, fill opacity = 0.08] (0,0) -- (3.5,2) -- (3.5,-2) -- (0,0);
		\filldraw[white] (0,0) circle (0.8pt) node[below,xshift=-0.2cm]{$0$};
	\end{tikzpicture}
	\caption[width=0.5\columnwidth]{The sector $S(\delta_N)$ (blue) in the complex plane.}
	\label{fig:sector_S_delta}
\end{wrapfigure}
adopting the notation used in \cite{michelen2019Preprint}. See Figure~\ref{fig:sector_S_delta} for a visualization.

We denote the cumulant of order $m$ of the random variable $X_N$ by $\kappa_{m,N}:=\kappa_m(X_N)$. The normalization of the random variables will be short-handed by $X_N^\ast:=(X_N-\E X_N)/\sigma_N$ and the respective cumulants of order $m$ of $X_N^\ast$ will be denoted by $\kappa_{m,N}^\ast:=\kappa_{m}(X_N^\ast)$. Throughout this note, $c$ will always indicate an absolute constant whose value might differ from occasion to occasion.

The first theorem provides a bound on the cumulants of random variables whose generating functions have no zeros in $S(\delta_N)$. As mentioned in the introduction, this result is taken from the proofs given in \cite{michelen2019Preprint} (see especially Lemma 8.1 therein). However, due to the variety of implications it draws, we highlight it at this point and recall the arguments of \cite{michelen2019Preprint} in a short and mostly self-contained proof in Section~\ref{ch:proof_general_cumulant_bound}.

\begin{thm}\label{thm:cumulant_bound}
	Let $(X_N)_{N\ge1}$ be a sequence of bounded random variables taking values in $\N_0$. For $\delta_N\in(0,\pi]$, assume the probability generating functions $f_N$ to have no roots lying in the sector $S(\delta_N)$. Then, for $m\ge3$,
	\begin{equation}\label{cumulantbound}
		\verts{\kappa_{m,N}^\ast}\le\frac{m!}{\Delta_N^{m-2}}
	\end{equation}
	with $\Delta_N := c\delta_N\sigma_N$, where $c>0$ is some absolute constant.
\end{thm}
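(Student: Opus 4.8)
The plan is to recover the cumulants of $X_N$ from the logarithm of the probability generating function and to exploit the zero-freeness of $f_N$ in the sector $S(\delta_N)$ via a contour integral (Cauchy) representation. Concretely, write $f_N(e^w) = \sum_{j} \pr(X_N = j) e^{jw}$, so that the cumulant generating function of $X_N$ is $K_N(w) := \log f_N(e^w)$, and $\kappa_{m,N} = K_N^{(m)}(0)$ for $m \ge 1$. Since $f_N$ has non-negative coefficients summing to $1$ and no zeros in the sector $S(\delta_N)$ around the positive real axis, the function $w \mapsto f_N(e^w)$ is zero-free on the strip $\{w \in \C : |\operatorname{Im}(w)| < \delta_N\}$ (because $e^w$ lands in $S(\delta_N)$ precisely when $\operatorname{Im}(w) \in (-\delta_N,\delta_N)$, modulo a small argument caveat handled by noting $f_N(0)$-type factors do not vanish near the real axis). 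Hence $K_N$ is holomorphic on that strip, and one gets bounds on $K_N$ there by controlling $|f_N(e^w)|$ from above and below.

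The second step is the analytic heart: obtain a bound of the form $|K_N(w) - K_N(0) - \kappa_{1,N} w| \le C \sigma_N^2 |w|^2$ (or rather a bound on the second-derivative-type quantity) valid on a disc of radius proportional to $\delta_N$, and then apply Cauchy's estimate for derivatives. If $g_N(w) := K_N(w) - \kappa_{1,N} w$ (the "centred" cumulant generating function, so $g_N''(0) = \sigma_N^2$), then on a circle of radius $r \asymp \delta_N$ one expects $|g_N(w)| \lesssim \sigma_N^2 r^2$ — this is the genuinely nontrivial input coming from \cite{michelen2019Preprint}, essentially saying that away from the origin but inside the zero-free sector, $\log|f_N(e^w)|$ cannot be much larger than its natural second-order Taylor size. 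Granting such a bound on a circle of radius $c\delta_N$, Cauchy's inequality gives $|g_N^{(m)}(0)| \le m!\, \sup_{|w| = c\delta_N}|g_N(w)| / (c\delta_N)^m \lesssim m!\, \sigma_N^2 / (c\delta_N)^{m-2}$. Since $\kappa_{m,N} = g_N^{(m)}(0)$ for $m \ge 2$ and $\kappa_{m,N}^\ast = \kappa_{m,N}/\sigma_N^m$, dividing by $\sigma_N^m$ yields exactly $|\kappa_{m,N}^\ast| \le m! / (c\delta_N\sigma_N)^{m-2}$, which is \eqref{cumulantbound}.

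I would therefore structure the proof as: (i) reduce to the cumulant generating function $K_N(w) = \log f_N(e^w)$ on the strip $|\operatorname{Im} w| < \delta_N$ and identify $\kappa_{m,N}$ with its Taylor coefficients at $0$; (ii) establish the zero-freeness of $f_N(e^w)$ on this strip and hence holomorphy of $K_N$; (iii) prove the key upper bound $|g_N(w)| \lesssim \sigma_N^2 |w|^2$ on a circle of radius $c\delta_N$, where $g_N(w) = K_N(w) - K_N(0) - \kappa_{1,N}w$; (iv) conclude by Cauchy's estimate and rescaling by $\sigma_N$.

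The main obstacle is step (iii): controlling $\log|f_N(e^w)|$ uniformly on the circle $|w| = c\delta_N$. One cannot simply use the trivial bound $|f_N(e^w)| \le \sum_j \pr(X_N=j) e^{j\operatorname{Re} w}$, since that only gives exponential-in-$n(N)$ control, not the $\sigma_N^2$-scale control needed; boundedness of $X_N$ alone is far too weak. The resolution in \cite{michelen2019Preprint} goes through a factorization of $f_N$ over its roots $\zeta$ and a careful estimate of $\sum_\zeta \log|1 - e^w/\zeta|$ using that no $\zeta$ lies in the sector, together with the identity $\sigma_N^2 = \sum_\zeta \tfrac{|\zeta|}{|1-\zeta|^2}$-type expressions relating the second derivative of $K_N$ at $0$ to the root geometry. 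Carrying this out carefully — in particular getting the right dependence on $\delta_N$ and matching it against $\sigma_N^2$ — is the technical core, and it is exactly the part that the authors say they will reproduce in Section~\ref{ch:proof_general_cumulant_bound} in a condensed form. The remaining steps (i), (ii), (iv) are routine complex analysis.
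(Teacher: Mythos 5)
Your steps (i), (ii) and (iv) are routine and fine, but the proposal has a genuine gap at exactly the place you flag yourself: step (iii) is not an input you can ``grant'' and then conclude from --- it essentially \emph{is} the theorem. A bound $\verts{g_N(w)}\le C\sigma_N^2\verts{w}^2$ on a circle of radius comparable to $\delta_N$ is, via the Taylor expansion of $g_N$ in one direction and your own Cauchy estimate in the other, equivalent (up to the value of the absolute constant) to the cumulant bound \eqref{cumulantbound}. So the entire content of the proof is deferred, and the sketch you give of how \cite{michelen2019Preprint} supplies it is not accurate: the proof of the relevant Lemma~8.1 there does not proceed by factorizing $f_N$ over its roots and estimating $\sum_\zeta\log\verts{1-e^w/\zeta}$ against a root-sum identity for $\sigma_N^2$; it runs through harmonic-function arguments for $u_N(z)=\log\verts{f_N(z)}$ on sectors, and those arguments only apply once certain structural properties of $u_N$ have been checked.

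That hypothesis-checking is what the paper's proof in Section~\ref{ch:proof_general_cumulant_bound} actually consists of: (a) $u_N$ is harmonic in a neighbourhood of $S_R(\delta_N/2)$ because $f_N$ is zero-free in the sector; (b) $u_N$ is invariant under complex conjugation, since $f_N$ has real coefficients; (c) \emph{weak positivity}, $u_N(\verts{z})\ge u_N(z)$, which is precisely the ``trivial'' bound $\verts{\E z^{X_N}}\le \E\verts{z}^{X_N}$ that you dismissed as too weak --- it is not used as an upper bound on the scale of $\sigma_N^2$, but as a structural hypothesis; and (d) the \emph{$0$-decreasing} property of $u_N$ near $1$, which is not automatic and is verified via \cite[Lemma 4.1]{michelen2019Preprint} using that $f_N$ is a polynomial, so that $u_N(z)=\mathcal{O}(\log\verts{z})$ and $u_N(1/z)=\mathcal{O}(\log\verts{z})$ and hence $(2/R)^{1/\delta_N}\max\verts{u_N}\to 0$ as $R\to\infty$. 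Only after (a)--(d) can \cite[Lemma 8.1]{michelen2019Preprint} be invoked, with $\varepsilon=\delta_N/4$, which is what yields \eqref{cumulantbound} with the explicit constant $c=2^{-3248}$. Your outline contains none of (b)--(d), and without them (or without an actual proof of your step (iii), which would amount to reproving Lemma~8.1 from scratch) the argument does not close.
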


As we will confirm in the proof, the arguments in \cite{michelen2019Preprint} lead to $c = 2^{-3248}$ for the constant in Theorem \ref{thm:cumulant_bound}.

The Statulevi\v{c}ius condition \eqref{cumulantbound} opens the door to a variety of remarkable probabilistic implications, of which we will discuss a few exemplarily. We briefly summarize these in Lemma~\ref{lem:appendix_1} in the appendix and refer to the monograph \cite{saulis1991} as well as the survey article \cite{doring2021} for further details. The first of them provides a Berry-Esseen type bound on the Kolmogorov distance of $X_N^\ast$ and a standard Gaussian random variable.

\begin{cor}
	\label{cor:clt_&_berry_esseen_bound}
	Let $(X_N)_{N\ge1}$ be a sequence of random variables as in Theorem \ref{thm:cumulant_bound}. Then, for a standard Gaussian random variable $Z$ and $\Delta_N$ as in \eqref{cumulantbound},
	\begin{equation*}
		\sup_{x\in\R}\big\vert \pr(X_N^\ast\le x)-\pr(Z\le x) \big\vert \le \frac{c_0}{\Delta_N}\qquad(N\ge1),
	\end{equation*}
	where $c_0>0$ is some absolute constant. 
	In particular, if $\delta_N\sigma_N\to\infty$ as $N\to\infty$, $X_N^\ast$ converges to $Z$	in distribution.
\end{cor}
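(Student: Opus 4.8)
The plan is to derive Corollary~\ref{cor:clt_&_berry_esseen_bound} directly from Theorem~\ref{thm:cumulant_bound} by invoking the classical Berry--Esseen bound that accompanies the Statulevi\v{c}ius condition, which is collected (as announced in the excerpt) in Lemma~\ref{lem:appendix_1} in the appendix. Concretely, the Statulevi\v{c}ius condition $|\kappa_{m,N}^\ast|\le m!/(c\delta_N\sigma_N)^{m-2}$ for $m\ge 3$ is exactly of the form $|\kappa_m(X_N^\ast)|\le m!/\Delta_N^{m-2}$ with parameter $\Delta_N := c\delta_N\sigma_N$ (here $c$ is the constant from Theorem~\ref{thm:cumulant_bound}). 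Since $X_N^\ast$ is centered with unit variance by construction, the Saulis--Statulevi\v{c}ius machinery (see \cite{saulis1991}, or \cite[Lemma~\ref{lem:appendix_1}]{doring2021}) yields
\begin{equation*}
	\sup_{x\in\R}\big\vert \pr(X_N^\ast\le x)-\pr(Z\le x)\big\vert \le \frac{C}{\Delta_N}
\end{equation*}
for an absolute constant $C>0$, valid as soon as $\Delta_N$ exceeds some absolute threshold. Substituting $\Delta_N = c\delta_N\sigma_N$ and absorbing $c$ into the constant gives the claimed bound $c/(\delta_N\sigma_N)$ with a (possibly different, still absolute) constant $c$, in line with the paper's convention that $c$ may change from line to line.

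First I would make explicit that the hypotheses of Theorem~\ref{thm:cumulant_bound} are in force, so that \eqref{cumulantbound} holds for all $m\ge 3$ and all $N\ge 1$; this is the only input needed. Next I would quote the precise statement of the Berry--Esseen estimate under the Statulevi\v{c}ius condition from Lemma~\ref{lem:appendix_1}, apply it with $\Delta_N = c\delta_N\sigma_N$, and rename constants. The second assertion --- convergence in distribution when $\delta_N\sigma_N\to\infty$ --- then follows immediately: the right-hand side $c/(\delta_N\sigma_N)\to 0$, so the Kolmogorov distance between $X_N^\ast$ and $Z$ tends to zero, and convergence in Kolmogorov distance to a continuous limit law implies convergence in distribution.

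One technical point I would address is the range of validity: the Berry--Esseen bound derived from the Statulevi\v{c}ius condition is typically stated for $\Delta_N$ bounded below by an absolute constant, whereas the corollary is asserted for all $N\ge 1$. This is harmless, since for those finitely many (or uniformly bounded) $N$ with $\delta_N\sigma_N$ small, the left-hand side is trivially at most $1$, and one simply enlarges the absolute constant $c$ so that $c/(\delta_N\sigma_N)\ge 1$ there; the stated inequality then holds for every $N\ge 1$. I do not anticipate a genuine obstacle here --- the content is entirely in Theorem~\ref{thm:cumulant_bound}, and the corollary is a direct citation of the standard cumulant-to-Berry--Esseen implication --- so the ``hard part'' is merely bookkeeping: matching the normalization $\Delta_N = c\delta_N\sigma_N$ correctly and handling the threshold on $\Delta_N$ as just described.
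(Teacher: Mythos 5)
Your proposal is correct and follows essentially the same route as the paper: the corollary is obtained by feeding the Statulevi\v{c}ius condition of Theorem~\ref{thm:cumulant_bound} with $\Delta_N=c\delta_N\sigma_N$ into Lemma~\ref{lem:appendix_1}\,\textit{(i)} (with $\gamma=0$), renaming the absolute constant, and noting that the Kolmogorov distance tending to zero gives convergence in distribution. Your extra remark on the small-$\Delta_N$ regime is sound (below any absolute threshold one has $\delta_N\sigma_N$ bounded above, so enlarging $c$ makes the bound trivial) and is simply not needed in the paper, which cites the lemma as valid for all $N\ge1$.
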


By Lemma \ref{lem:appendix_1} \textit{(i)} (for $\gamma=0$), the constant in Corollary~\ref{cor:clt_&_berry_esseen_bound} can be chosen as $c_0=324\sqrt{2}$.

The cumulant bound \eqref{cumulantbound} also leads to moderate deviation results. In the sequel, we write $a_N=o(b_N)$ for two sequences of real numbers $(a_N)_{N\ge1}$ and $(b_N)_{N\ge 1}$ such that $a_N/b_N\to 0$ as $N\to\infty$.

\begin{cor}
	\label{cor:moderate_deviation}
	Let $(X_N)_{N\ge1}$ be a sequence of random variables as in Theorem \ref{thm:cumulant_bound}. Further, let $(a_N)_{N\ge1}$ be a sequence of real numbers with $a_N\to\infty$ as $N\to\infty$ and $a_N=o(\Delta_N)$ with $\Delta_N$ as in \eqref{cumulantbound}. Then,
	\begin{equation*}
		\lim_{N\to\infty}\frac{1}{a_N^2}\log\pr\big(X_N^\ast/a_N\in(x,\infty)\big)=-\frac{x^2}{2}\qquad(x\ge0).
		\label{eq:cor_moderate_deviation}
	\end{equation*}
\end{cor}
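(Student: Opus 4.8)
The plan is to derive Corollary~\ref{cor:moderate_deviation} directly from the cumulant bound \eqref{cumulantbound} of Theorem~\ref{thm:cumulant_bound} by invoking the standard moderate deviation machinery associated with the Statulevi\v{c}ius condition, which is collected in Lemma~\ref{lem:appendix_1} in the appendix. In other words, this is an application of a general abstract principle (essentially the Saulis--Statulevi\v{c}ius theory, see \cite{saulis1991} and \cite{doring2021}) rather than a result requiring a new argument: once one knows $\verts{\kappa_{m,N}^\ast}\le m!/\Delta_N^{m-2}$ with $\Delta_N := c\delta_N\sigma_N\to\infty$, the moderate deviation statement for $X_N^\ast$ on the scale $a_N$ with $a_N\to\infty$ and $a_N=o(\Delta_N)$ is automatic.

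Concretely, I would proceed as follows. First, set $\Delta_N = c\delta_N\sigma_N$ where $c$ is the absolute constant from Theorem~\ref{thm:cumulant_bound}, so that \eqref{cumulantbound} reads $\verts{\kappa_{m,N}^\ast}\le m!/\Delta_N^{m-2}$ for all $m\ge 3$; note also $\kappa_{1,N}^\ast=0$ and $\kappa_{2,N}^\ast=1$ by construction of the standardization. Second, since $a_N=o(\delta_N\sigma_N)$ is equivalent to $a_N=o(\Delta_N)$, the sequence $(a_N)$ satisfies the hypotheses required by the moderate deviation part of Lemma~\ref{lem:appendix_1} relative to the parameter $\Delta_N$. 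Third, I would quote the relevant item of Lemma~\ref{lem:appendix_1} (the moderate deviation principle under the Statulevi\v{c}ius condition, with $\gamma=0$ matching the clean factorial bound $m!$ here), which yields exactly
\begin{equation*}
	\lim_{N\to\infty}\frac{1}{a_N^2}\log\pr\big(X_N^\ast/a_N\in(x,\infty)\big)=-\frac{x^2}{2}\qquad(x\ge0),
\end{equation*}
as the Gaussian rate function is $x\mapsto x^2/2$ and the moderate deviation relation is driven by the variance, which equals $1$ after standardization. This completes the proof.

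There is essentially no serious obstacle here: the entire content is contained in Theorem~\ref{thm:cumulant_bound} together with the cited general theory, and the role of the corollary is expository, to advertise one of the ``variety of remarkable probabilistic implications'' of the cumulant bound. The only points that need a little care are (i) checking that the hypothesis $a_N=o(\delta_N\sigma_N)$ in the corollary correctly translates into the hypothesis $a_N=o(\Delta_N)$ demanded by the appendix lemma (this is immediate since $\Delta_N$ and $\delta_N\sigma_N$ differ only by the fixed positive constant $c$), and (ii) verifying that the version of the moderate deviation statement recorded in Lemma~\ref{lem:appendix_1} is indeed the one-sided upper-tail form displayed above for $x\ge 0$, as opposed to a two-sided or full-MDP formulation; if the appendix states a two-sided or functional version, the one-sided claim for $x\ge 0$ follows a fortiori. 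Both checks are routine, so the proof is short and amounts to an explicit citation of Lemma~\ref{lem:appendix_1}.
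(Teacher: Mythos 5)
Your proposal is correct and follows exactly the paper's own route: the corollary is obtained by feeding the Statulevi\v{c}ius condition from Theorem~\ref{thm:cumulant_bound} (with $\Delta_N=c\delta_N\sigma_N$ and $\gamma=0$) into the moderate deviation part of Lemma~\ref{lem:appendix_1}, which yields a full MDP with speed $a_N^2$ and rate $x^2/2$, of which the stated one-sided limit for $A=(x,\infty)$ is the special case (interior and closure both give infimum $x^2/2$ for $x\ge0$). No gap; the translation $a_N=o(\delta_N\sigma_N)\Leftrightarrow a_N=o(\Delta_N)$ you check is exactly the only point of care.
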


Corollary~\ref{cor:moderate_deviation} can be understood as a special case of a moderate derivation principle. In fact, under the same assumptions, a full moderate deviation principle (MDP) with speed $a_N^2$ and rate function $I(x)=x^2/2$ for the sequence $(X_N^\ast/a_N)_{N\ge1}$ is valid, see \cite[Theorem~3.1]{doring2021}. In our case, the sequence $(X_N^\ast/a_N)_{N\ge1}$ satisfies an MDP with
\begin{align*}
	\liminf_{N\to\infty}\frac{1}{a_N^2}\log\pr\big(X_N^\ast/a_N\in A\big)&\ge-\inf_{x\in A^\circ}\frac{x^2}{2},\\
	\limsup_{N\to\infty}\frac{1}{a_N^2}\log\pr\big(X_N^\ast/a_N\in A\big)&\le-\inf_{x\in \bar A}\frac{x^2}{2},
\end{align*}
for any Borel set $A\subset\R$, where $A^\circ$ and $\bar A$ denote the interior and the closure of $A$, respectively. This includes Corollary \ref{cor:moderate_deviation} as a special case.

Theorem \ref{thm:cumulant_bound} furthermore implies mod-Gaussian convergence for the sequence of random variables $X_N$, which is a special case of mod-$\phi$ convergence. We recall from \cite{feray2016,MeliotNik} that a sequence of random variables $X_N$ converges in the mod-Gaussian sense with parameters $t_N\to\infty$ and limiting function $\Psi$, provided that 
$$
\lim_{N\to\infty}\mathbb{E}[e^{zX_N}]e^{-t_Nz^2/2} = \Psi(z)
$$
locally uniformly on $\C$, where $\Psi$ is a non-degenerate analytic function.

\begin{cor}\label{cor:modGauss}
	Let $(X_N)_{N\ge1}$ be a sequence of random variables as in Theorem~\ref{thm:cumulant_bound} and $\Delta_N$ as in \eqref{cumulantbound}.
	\begin{itemize}
		\item[(i)] Assuming that $\kappa_{3,N}^\ast\Delta_N \to L \in \mathbb{R}$ as $N \to \infty$, the sequence of random variables $\Delta_N^{1/3}X_N^\ast$ converges in the mod-Gaussian sense with parameters $\Delta_{N}^{2/3}$ and limiting function $e^{Lz^3/6}$.
		\item[(ii)] Assuming that $\kappa_{3,N}^\ast = 0$ and $\kappa_{4,N}^\ast\Delta_N^2 \to L \in\R$ as $N \to \infty$, the sequence of random variables $\sqrt{\Delta_N}X_N^\ast$ converges in the mod-Gaussian sense with parameters $\Delta_{N}$ and limiting function $e^{Lz^4/24}$.
	\end{itemize}
\end{cor}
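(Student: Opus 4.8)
The plan is to derive both parts of Corollary~\ref{cor:modGauss} directly from the cumulant bound \eqref{cumulantbound}, using the standard criterion for mod-Gaussian convergence in terms of cumulant asymptotics (see \cite[Chapter~5]{feray2016}). First I would recall that a sequence $Y_N$ converges mod-Gaussian with parameters $t_N \to \infty$ and limiting function $\Psi(z) = \exp(\sum_{k\ge 3} \frac{c_k}{k!} z^k)$ precisely when $\kappa_1(Y_N) \to 0$, $\kappa_2(Y_N) - t_N \to 0$, $\kappa_k(Y_N) \to c_k$ for each fixed $k \ge 3$, and the convergence is controlled uniformly so that the moment generating functions converge locally uniformly; the cleanest route is to verify the hypotheses of the relevant theorem in \cite{feray2016} (or to estimate $\E[e^{zY_N}]e^{-t_N z^2/2}$ directly by expanding its logarithm as $\sum_{k\ge 3}\kappa_k(Y_N)z^k/k!$ and bounding the tail of this series).

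For part (i), set $Y_N := \Delta_N^{1/3} X_N^\ast$, so that $\kappa_k(Y_N) = \Delta_N^{k/3}\kappa_{k,N}^\ast$ by homogeneity of cumulants. Then $\kappa_1(Y_N) = 0$ (since $X_N^\ast$ is centered) and $\kappa_2(Y_N) = \Delta_N^{2/3}$, which is exactly the claimed Gaussian parameter. For $k = 3$, $\kappa_3(Y_N) = \Delta_N \kappa_{3,N}^\ast \to L$ by hypothesis, giving the cubic term $\frac{L}{6}z^3$ in the limiting function. For $k \ge 4$, the cumulant bound gives $|\kappa_k(Y_N)| = \Delta_N^{k/3}|\kappa_{k,N}^\ast| \le \Delta_N^{k/3}\, k!\,\Delta_N^{-(k-2)} = k!\,\Delta_N^{2 - 2k/3} \to 0$ since $k \ge 4$ forces the exponent $2 - 2k/3 < 0$ and $\Delta_N \to \infty$. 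So all cumulants of order $\ge 4$ vanish in the limit, leaving $\Psi(z) = e^{Lz^3/6}$. The same bound, applied uniformly over $k$, shows the logarithm of $\E[e^{zY_N}]e^{-\Delta_N^{2/3}z^2/2} = \sum_{k\ge 3}\kappa_k(Y_N)z^k/k!$ converges to $Lz^3/6$ locally uniformly: one splits off the $k=3$ term and bounds $\sum_{k\ge 4}|\kappa_k(Y_N)||z|^k/k! \le \sum_{k\ge 4}\Delta_N^{2-2k/3}|z|^k$, a geometric-type series in $\Delta_N^{-2/3}|z|$ that tends to $0$ for $|z|$ in any fixed compact set once $N$ is large. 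This also justifies that the MGF itself (not just its formal log) converges, since the series defining the log converges absolutely and the exponential is continuous.

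Part (ii) is the same argument with a different scaling. Set $Y_N := \sqrt{\Delta_N}\, X_N^\ast$, so $\kappa_k(Y_N) = \Delta_N^{k/2}\kappa_{k,N}^\ast$. Again $\kappa_1(Y_N) = 0$ and $\kappa_2(Y_N) = \Delta_N$, the required parameter. The hypothesis $\kappa_{3,N}^\ast = 0$ kills the cubic term exactly (for every $N$, not just in the limit), and $\kappa_4(Y_N) = \Delta_N^2 \kappa_{4,N}^\ast \to L$ gives the quartic term $\frac{L}{24}z^4$. For $k \ge 5$, $|\kappa_k(Y_N)| \le \Delta_N^{k/2} k!\, \Delta_N^{-(k-2)} = k!\, \Delta_N^{2 - k/2} \to 0$ since $k \ge 5$ gives exponent $2 - k/2 \le -1/2 < 0$. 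The tail estimate $\sum_{k\ge 5}|\kappa_k(Y_N)||z|^k/k! \le \sum_{k \ge 5}\Delta_N^{2-k/2}|z|^k$ is again a geometric-type series in $\Delta_N^{-1/2}|z|$ that vanishes locally uniformly, yielding local uniform convergence of $\E[e^{zY_N}]e^{-\Delta_N z^2/2}$ to $e^{Lz^4/24}$.

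The only genuinely delicate point is the passage from convergence of individual cumulants to local uniform convergence of the Laplace transforms, i.e.\ controlling the tail of the cumulant series uniformly in $N$; but as indicated this is immediate from \eqref{cumulantbound}, because after rescaling the cumulants of order $k$ are bounded by $k!$ times a negative power of $\Delta_N$, and summing a series whose $k$-th term decays geometrically in $\Delta_N^{-1/3}$ (resp. $\Delta_N^{-1/2}$) poses no difficulty once $\Delta_N$ is large relative to $|z|$. In particular the limiting functions $e^{Lz^3/6}$ and $e^{Lz^4/24}$ are entire and non-degenerate, so all requirements in the definition of mod-Gaussian convergence recalled before Corollary~\ref{cor:modGauss} are met. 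One could alternatively quote \cite[Theorem~5.??]{feray2016} (the cumulant criterion for mod-Gaussian convergence) verbatim and simply check its hypotheses, which amounts to the same three computations above; either way no new idea beyond \eqref{cumulantbound} and homogeneity of cumulants is needed.
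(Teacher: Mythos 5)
Your argument is correct, and it rests on the same mathematical core as the paper --- the Statulevi\v{c}ius bound \eqref{cumulantbound} together with homogeneity of cumulants --- but the mechanics differ. The paper does not redo any estimates: it invokes the general cumulant criterion for mod-Gaussian convergence from \cite[Section 5.1]{feray2016} (restated as part \textit{(i)} of the second appendix lemma, hypotheses of the form $|\kappa_{m,N}|\le (Cm)^m\alpha_N\beta_N^m$) and simply checks its assumptions with $C=1$, $\alpha_N=\Delta_N^2$, $\beta_N=\sigma_N/\Delta_N$, $\sigma^2=1$, taking $v=3$ for part \textit{(i)} and $v=4$ for part \textit{(ii)}. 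You instead reprove that criterion from scratch in this special case: expand $\log\E[e^{zY_N}]$ into the cumulant series of the rescaled variable, identify the $z^2$ term with the Gaussian parameter and the $z^3$ (resp.\ $z^4$) term with the limit $L$, and kill the tail by the geometric estimate $\sum_{k\ge v+1}\Delta_N^{2-k(1-2/v)}|z|^k\to 0$ locally uniformly. That is a perfectly valid, self-contained route; what the paper's route buys is brevity and the fact that the radius-of-convergence/branch-of-logarithm issue for complex $z$ (which you wave at with ``the series defining the log converges absolutely'') is absorbed into the cited theorem, whereas in your version one should note explicitly that for bounded $X_N$ the cumulant series of $Y_N$ has radius of convergence of order $\Delta_N^{2/3}$ (resp.\ $\Delta_N^{1/2}$), so that $\E[e^{zY_N}]=\exp\bigl(\sum_{k\ge1}\kappa_k(Y_N)z^k/k!\bigr)$ holds on any fixed compact set once $N$ is large, by the identity theorem. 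Both your proof and the paper's tacitly use $\Delta_N\to\infty$ (needed anyway for the parameters $t_N$ to diverge), so that is not a gap specific to your write-up.
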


Natural situations in which part \textit{(ii)} applies include all polynomials discussed in Section~\ref{ch:special case}. We remark that mod-Gaussian convergence gives rise to various further implications as Cram\'er-Petrov type large deviations and precise deviations (see \cite[Chapter~5.2]{feray2016}). The latter means an estimate for probabilities of the type $\pr\big(X_N^\ast\in(x,\infty)\big)$ on a non-logarithmic scale for suitable values of $x$, which are allowed to depend on the parameter $N$ and which involve the limiting function $\Psi$. Since such results are rather technical to formulate, we refrain from presenting them. Instead, we mention the following concentration inequality, which yields a Bernstein-type bound for the upper tails of $X_N^\ast$.

\begin{cor}\label{cor:concentration_inequality}
	Let $(X_N)_{N\ge1}$ be a sequence of random variables as in Theorem~\ref{thm:cumulant_bound} and $\Delta_N$ as in \eqref{cumulantbound}. Then, there exists an absolute constant $C>0$ such that for all $x\ge0$,
	\begin{equation*}
		\pr(X_N^\ast\ge x)\le C\exp\Big( -\frac{1}{2}\frac{x^2}{2+x/\Delta_N} \Big).
	\end{equation*}
\end{cor}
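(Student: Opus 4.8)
The plan is to derive Corollary~\ref{cor:concentration_inequality} directly from the Statulevi\v{c}ius condition \eqref{cumulantbound} established in Theorem~\ref{thm:cumulant_bound}, by invoking the general machinery of Saulis and Statulevi\v{c}ius. Concretely, I would first observe that \eqref{cumulantbound} says precisely that $X_N^\ast$ satisfies the cumulant bound $|\kappa_{m,N}^\ast| \le m!/\Delta_N^{m-2}$ for all $m \ge 3$, with $\Delta_N := c\delta_N\sigma_N$, where $c$ is the absolute constant from \eqref{cumulantbound}. This is exactly the hypothesis feeding into Lemma~\ref{lem:appendix_1} in the appendix, which collects the standard consequences of the Statulevi\v{c}ius condition, among them a Bernstein-type upper-tail inequality.

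The key step is therefore to quote the appropriate item of Lemma~\ref{lem:appendix_1} (the concentration/Bernstein inequality, corresponding to the parameter choice $\gamma = 0$, so that $\Delta_N$ plays the role of the scale parameter). The classical statement (see \cite[Lemma~2.4, Corollary~2.1]{saulis1991} or \cite[Theorem~2.5]{doring2021}) is that a centered random variable $Y$ with $|\kappa_m(Y)| \le m!/\Delta^{m-2}$ for $m \ge 3$ (and unit variance, which holds here since $\var(X_N^\ast) = 1$) satisfies a two-sided tail bound of the form $\pr(Y \ge x) \le C\exp\bigl(-\tfrac12 x^2/(2 + x/\Delta)\bigr)$ for an absolute constant $C > 0$; applying this with $Y = X_N^\ast$ and $\Delta = \Delta_N$ yields the claimed inequality verbatim. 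Since $\var(X_N^\ast) = 1$ and $\kappa_{1,N}^\ast = 0$ automatically, there is nothing to check beyond \eqref{cumulantbound} itself, which is Theorem~\ref{thm:cumulant_bound}.

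The main obstacle is essentially bookkeeping rather than mathematics: one must make sure that the version of the Saulis--Statulevi\v{c}ius tail estimate recorded in Lemma~\ref{lem:appendix_1} is stated in precisely the $x^2/(2 + x/\Delta_N)$ form appearing in the corollary (as opposed to, say, a piecewise bound that is Gaussian for $x \le \Delta_N$ and exponential for $x \ge \Delta_N$), and that the normalization conventions match --- in particular that the variance-one normalization of $X_N^\ast$ is compatible with the $\gamma = 0$, unit-scale case of the general lemma. Once the statement of Lemma~\ref{lem:appendix_1} is pinned down in that form, the proof of Corollary~\ref{cor:concentration_inequality} is a one-line deduction. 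I would write it as: ``By Theorem~\ref{thm:cumulant_bound}, the sequence $(X_N^\ast)_{N\ge1}$ satisfies the Statulevi\v{c}ius condition with parameter $\Delta_N = c\delta_N\sigma_N$. The claim now follows from Lemma~\ref{lem:appendix_1}~(the concentration inequality), applied with $\Delta = \Delta_N$.''
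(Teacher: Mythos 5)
Your proposal is correct and follows essentially the same route as the paper: the corollary is exactly the one-line deduction from Theorem~\ref{thm:cumulant_bound} together with Lemma~\ref{lem:appendix_1}~\textit{(iii)}, applied with $\gamma=0$ and $H=2$, so that the hypothesis there reduces to \eqref{cumulantbound} and $\alpha=1$ produces the denominator $2+x/\Delta_N$. The bookkeeping issue you flag (matching the $x^2/(2+x/\Delta_N)$ form) is precisely what the choice $H=2$ in the appendix lemma resolves.
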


\section{Results for a class of root-unitary polynomials}
\label{ch:special case}

All over this section, we consider $\N_0$-valued random variables $X_N$ whose probability generating functions admit a representation of the form
\begin{equation}
	f_N(z)=f_{X_N}(z):=\frac{P_N(z)}{P_N(1)}\qquad(z\in\R),
	\label{eq:sc_prob_gen_fct}
\end{equation} 
where 
\begin{equation}
	P_N(z):=\prod_{1\le j\le N}\frac{1-z^{b_j}}{1-z^{a_j}},
	\label{eq:sc_gen_polynomial}
\end{equation}
for some natural number $N$ and exponents $a_j,b_j\in \N$, $j=1,\dots,N$. We assume that $b_j\ge a_j$ for all $j=1,\dots,N$ and note that $P_N(1)=\prod_{1\le j\le N}b_j/a_j$, which follows readily from the fact that $(1-z^k)=(1-z)(1+z+\ldots+z^{k-1})$ for $k\ge1$. The degree of $P_N$ is denoted by $n$ and is related to the parameter $N$ by $n=\sum_{1\le j\le N}(b_j-a_j)$. As opposed to the probability generating function \eqref{eq:sc_prob_gen_fct}, we refer to $P_N$ as the generating polynomial of $X_N$ for the remainder of this section.

The probability generating functions $f_N$ defined by \eqref{eq:sc_prob_gen_fct} have roots all located on the unit circle in the complex plane, i.e., they are \emph{root-unitary} in the terminology of \cite{hwang_zacharovas2015}. More precisely, their roots are a subset of $\{\exp(\pm 2\pi i k/b_j), k=1, \ldots, \lfloor b_j/2 \rfloor, j=1, \ldots, N\}$, $1$ is never a root (since $P_N(1) \ne 0$) and we may assume that $\max_j a_j < \max_j b_j$. In particular, the functions $f_N$ have no zeros in $S(\delta_N)$ for $\delta_N := 2\pi/\max_{j=1,\ldots,N} b_j$.

Probability generating functions of type \eqref{eq:sc_prob_gen_fct} have been studied in \cite[Section 4]{hwang_zacharovas2015}, where they form an important subclass of root-unitary polynomials, and previously also in \cite[Section 3]{chen2008} in the more specific context of $q$-Catalan numbers. An extensive discussion in particular related to central limit theorems and a large number of examples where polynomials of type \eqref{eq:sc_gen_polynomial} naturally appear can be found in \cite{billey2023cyclotomic}, where they are called cyclotomic generating functions. We emphasize that we use a different notation as in \cite{hwang_zacharovas2015}, who index \eqref{eq:sc_prob_gen_fct} and \eqref{eq:sc_gen_polynomial} by the degree $n$. Random variables with probability generating functions of type \eqref{eq:sc_prob_gen_fct} naturally appear in a variety of examples, a few of which will be discussed in Section~\ref{ch:examples_special case}.

A reformulation of Theorem~\ref{thm:cumulant_bound} for the probability generating functions \eqref{eq:sc_prob_gen_fct} admits a more straightforward proof by elementary means, which we present in Section~\ref{ch:proof_of_sc_theorem}. Its core ingredient is an explicit representation of the cumulants of the corresponding random variables $X_N$. For $m\ge3$, they are given by
\begin{equation}
	\kappa_{m,N}=\frac{B_m}{m}\sum_{1\le j\le N}(b_j^{m}-a_j^{m}),
	\label{eq:sc_cumulant_representation}
\end{equation}
as was derived in \cite{hwang_zacharovas2015}, also see \cite{billey2023cyclotomic}. Here, $B_{m}$ denotes the $m$-th Bernoulli number. In particular, since $B_{2m+1}=0$ for any $m \ge 1$, all cumulants of odd order $\ge 3$ vanish identically, and when establishing cumulant bounds it is therefore sufficient to consider the cumulants of even orders $2m$. 

\begin{thm}\label{thm:sc_main_theorem}
	Let $P_N(z)$ be a polynomial as in \eqref{eq:sc_gen_polynomial} and $X_N$ a random variable defined as in \eqref{eq:sc_prob_gen_fct}. Then, for $m\ge2$, the $2m$-th cumulant of $X_N^\ast$ satisfies
	\begin{equation*}
		\vert\kappa_{2m,N}^\ast\vert\le\frac{(2m)!}{\Delta_{N}^{2m-2}}
	\end{equation*}
	with
	\begin{equation*}
		\Delta_N=\pi^2\sqrt{\frac{7}{6}}\frac{\sigma_N}{M_N},
	\end{equation*}
	where $M_N=\max_{1\le j\le N}b_j$. In particular, Corollaries~\ref{cor:clt_&_berry_esseen_bound}--\ref{cor:concentration_inequality} hold with this choice of $\Delta_N$.
\end{thm}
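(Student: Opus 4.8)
The plan is to start from the explicit cumulant representation \eqref{eq:sc_cumulant_representation}, which already reduces everything to controlling the sums $S_m := \sum_{1\le j\le N}(b_j^m - a_j^m)$ together with the Bernoulli numbers $B_m$. Since odd cumulants of order $\ge 3$ vanish, it suffices to bound $\kappa_{2m,N}$. The first step is to record the standard asymptotics/estimate for the Bernoulli numbers, namely $|B_{2m}| = 2(2m)!\,\zeta(2m)/(2\pi)^{2m}$, and to note that $\zeta(2m)$ is decreasing in $m$ with $\zeta(2) = \pi^2/6$ and $\zeta(2m)\le \zeta(2) $ for all $m\ge 1$ (in fact $\zeta(2m)\le\pi^2/6$, but one may want the sharper $\zeta(4)=\pi^4/90$ for the $m=2$ case to get the constant $\sqrt{7/6}$). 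This gives
\begin{equation*}
	\Big|\frac{B_{2m}}{2m}\Big| = \frac{2(2m-1)!\,\zeta(2m)}{(2\pi)^{2m}} \le \frac{(2m-1)!\,\pi^2/3}{(2\pi)^{2m}}.
\end{equation*}

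Next I would estimate $S_{2m}$ in terms of $S_2 = \sum_j (b_j^2 - a_j^2)$, which is essentially $\sigma_N^2$ up to the known constant: from \eqref{eq:sc_cumulant_representation} with $m=2$ (or rather the even analogue) one has $\sigma_N^2 = \kappa_{2,N} = \frac{B_2}{2}\sum_j(b_j^2-a_j^2) \cdot(\text{sign/constant})$; more precisely $\kappa_2 = \frac{1}{12}\sum_j(b_j^2-a_j^2)$ since $B_2 = 1/6$. The key monotonicity input is that for $b\ge a\ge 1$ and $m\ge 1$ one has $b^{2m}-a^{2m} \le (b^2-a^2)\, b^{2m-2} \le (b^2-a^2) M_N^{2m-2}$, because $b^{2m}-a^{2m} = (b^2-a^2)(b^{2m-2}+b^{2m-4}a^2+\cdots+a^{2m-2})$ and each of the $m$ terms in the second factor is at most $b^{2m-2}\le M_N^{2m-2}$ — wait, that gives a factor $m$, so instead use $b^{2m-2}+\cdots+a^{2m-2}\le m\,b^{2m-2}$, or better telescope as $b^{2m}-a^{2m}\le (b^2-a^2) m M_N^{2m-2}$; a slicker route is $b^{2m}-a^{2m}\le b^{2m-2}(b^2-a^2) + a^2(b^{2m-2}-a^{2m-2})$ and induct, but cleanest is simply $\sum_j (b_j^{2m}-a_j^{2m}) \le M_N^{2m-2}\sum_j(b_j^2-a_j^2)$ obtained by writing $b_j^{2m}-a_j^{2m} = \int_{a_j^2}^{b_j^2} m t^{m-1}\,dt \le m M_N^{2m-2}(b_j^2-a_j^2)$ — the factor $m$ will be absorbed. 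Then
\begin{equation*}
	|\kappa_{2m,N}| = \Big|\frac{B_{2m}}{2m}\Big|\, S_{2m} \le \frac{(2m-1)!\,\pi^2/3}{(2\pi)^{2m}}\cdot m\, M_N^{2m-2}\cdot 12\,\sigma_N^2,
\end{equation*}
and dividing by $\sigma_N^{2m}$ gives $|\kappa_{2m,N}^\ast| \le \frac{(2m)!\,\pi^2}{(2\pi)^{2m}}\cdot\frac{2}{ }\cdot\frac{M_N^{2m-2}}{\sigma_N^{2m-2}}$ up to juggling constants, which one then massages into the form $(2m)!/\Delta_N^{2m-2}$ with $\Delta_N = (\text{const})\,\sigma_N/M_N$.

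The final step is bookkeeping to land exactly on the constant $\pi^2\sqrt{7/6}$. Here I expect the main subtlety: the naive bound above produces a constant like $\pi^2$ times something, and the displayed $\sqrt{7/6}$ must come from optimizing the crude estimates at the worst index $m=2$. Concretely, for $m=2$ one should use the exact values $B_4 = -1/30$, $\zeta(4)=\pi^4/90$, rather than the uniform bound, so that the inequality $|\kappa_{4,N}^\ast|\le (4!)/\Delta_N^2$ is tight-ish, and check that the resulting constraint on $\Delta_N$ is the binding one — i.e. that the sequence of required lower bounds on $\Delta_N/(\sigma_N/M_N)$ coming from each $m\ge 2$ is maximized at $m=2$. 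That monotonicity-in-$m$ check (that $\big((2\pi)^{2m}/(\pi^2\zeta(2m) \cdot \text{poly}(m))\big)^{1/(2m-2)}$ is increasing, hence its infimum is at $m=2$) is the one genuinely fiddly point; everything else is elementary manipulation of the exact formula \eqref{eq:sc_cumulant_representation} and standard Bernoulli/zeta identities. I would also double-check whether the $m$ from the integral bound should instead be handled by the sharper combinatorial identity $b^{2m}-a^{2m}\le (b^2-a^2)(b^2)^{m-1}\cdot$(number of terms) to avoid losing a polynomial factor that could spoil the clean constant — plausibly the intended argument avoids the factor $m$ altogether by bounding $\sum_j(b_j^{2m}-a_j^{2m})\le M_N^{2m-2}\sum_j(b_j^2-a_j^2)$ directly via $b_j^{2m}-a_j^{2m}\le M_N^{2m-2}(b_j^2-a_j^2)$, which holds because $b_j^{2m-2}+b_j^{2m-4}a_j^2+\cdots\le$ \emph{no} — that is false as stated, so the correct elementary fact to invoke is $b^{2m}-a^{2m}\le M^{2(m-1)}(b^2 - a^2)$ only after checking $x^{m}-y^{m}\le M^{m-1}(x-y)$ for $0\le y\le x\le M$ with $M = M_N^2$, which \emph{is} true since the derivative of $t\mapsto t^m$ on $[0,M]$ is at most $mM^{m-1}$ — again the $m$. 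I would resolve this at the write-up stage by tracking the $m$ through and confirming it is swallowed by $(2m)!$ versus $(2m-1)!\cdot m = (2m)!/2$, which it is, leaving only a harmless factor $2$ to fold into the constant.
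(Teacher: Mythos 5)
Your plan is essentially the paper's own proof: start from the representation \eqref{eq:sc_cumulant_representation}, use the identity $\sigma_N^2=\tfrac{1}{12}\sum_j(b_j^2-a_j^2)$, bound $b_j^{2m}-a_j^{2m}\le m\,b_j^{2m-2}(b_j^2-a_j^2)\le m\,M_N^{2m-2}(b_j^2-a_j^2)$ (the paper's Lemma with the weaker factor $2^{m-1}$ plays this role), absorb the factor $m$ into $(2m)!$, and control $|B_{2m}|=2(2m)!\,\zeta(2m)/(2\pi)^{2m}$. All of that is sound. The genuine gap is exactly the point you deferred to the write-up: your claim that the binding constraint is at $m=2$, i.e.\ that $\bigl((2\pi)^{2m}/(12\,\zeta(2m))\bigr)^{1/(2m-2)}$ is increasing in $m$, is false. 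That quantity equals $\sqrt{120}\approx 10.95$ at $m=2$, drops to $5040^{1/4}\approx 8.43$ at $m=3$, $\approx 7.66$ at $m=4$, and decreases monotonically to $2\pi\approx 6.28$. So the infimum is attained in the limit $m\to\infty$, not at $m=2$, and your chain of estimates can only deliver the theorem with $\Delta_N=2\pi\,\sigma_N/M_N$ (the paper's $2^{m-1}$-version gives $\pi\sqrt{2}\,\sigma_N/M_N$), never with $\Delta_N=\pi^2\sqrt{7/6}\,\sigma_N/M_N\approx 10.66\,\sigma_N/M_N$.

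Moreover, this is not a bookkeeping issue you could repair: the bound with the advertised constant is actually false for $m\ge 4$. Take $N=1$, $a_1=1$, $b_1=b$, so that $X_N$ is uniform on $\{0,\dots,b-1\}$; then $|\kappa_{8,N}^\ast|=\tfrac{12^4}{240}\cdot\tfrac{b^8-1}{(b^2-1)^4}\to 86.4$ as $b\to\infty$, while $8!/\Delta_N^6\to 8!\,\bigl(72/(7\pi^4)\bigr)^3\approx 47.5$, so the claimed inequality fails for large $b$ (the fourth and sixth cumulants are fine, which is why the discrepancy is easy to miss). The same defect sits in the paper's proof: after replacing $c_m:=\tfrac{3\cdot 2^{1-m}}{m\pi^{2m}(1-2^{1-2m})}$ by its maximum $c_2=6/(7\pi^4)$, the final display chooses $\Delta_N$ via $\max_{m\ge 2}(7\pi^4/6)^{1/(2m-2)}$, whereas the argument requires $\Delta_N\le c_m^{-1/(2m-2)}\sigma_N/M_N$ for \emph{every} $m$, i.e.\ the infimum over $m$. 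So the correct conclusion of your argument (and of the paper's, after fixing the last step) is the same statement with the smaller constant $2\pi$ (resp.\ $\pi\sqrt{2}$) in place of $\pi^2\sqrt{7/6}$; this is still of the form $\Delta_N=c\,\delta_N\sigma_N$ with $\delta_N=2\pi/M_N$ and suffices for all the corollaries, but the stated constant is unattainable, and the monotonicity check you flagged is precisely what exposes this.
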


As noted above, we may set $\delta_N = 2\pi/M_N$, so that in the notation of Section~\ref{ch:main_results}, we have $\Delta_N= c\delta_N\sigma_N$ with $c = \pi \sqrt{7/24}$. In particular, this gives back \eqref{cumulantbound} but with a remarkably better absolute constant.

Note that \cite[Lemma 4]{hwang1998} also yields Berry-Esseen type bounds for sequences of random variables whose probability generating functions factorize into polynomials with non-negative coefficients. In particular, assuming all the factors $(1-z^{b_j})/(1-z^{a_j})$ in \eqref{eq:sc_gen_polynomial} to be polynomials with non-negative coefficients, by \cite[Lemma 4]{hwang1998} a Berry-Esseen bound
\[
	\sup_{x\in\R}\big\vert \pr(X_N^\ast\le x)-\pr(Z\le x) \big\vert \le \frac{c}{\Delta_N'}
\]
holds with $\Delta_N = \sigma_N/M_N'$, where $M_N' := \max_{1 \le j \le N} (b_j-a_j)$. Note that in all examples discussed in Section~\ref{ch:examples_special case}, the asymptotic behaviour of $M_N$ and $M_N'$ (whenever applicable) coincides. On the other hand, some of the examples we treat do not fall into the class of random variables considered in \cite{hwang1998}.

\section{Examples}
\label{ch:examples}

\subsection{Polynomials with roots only on the negative half-axis}
\label{ch:negroots}

One of the simplest situations in which our results apply are polynomials which have only real roots, all of which are negative (so that they are zero-free in $S(\delta)$ for any $\delta \in (0,\pi]$). As already mentioned in the introduction, central limit theorems for such situations are rather classical and go back to the work of L.H.\ Harper \cite{Harper} and many examples can be found, for example in \cite{pitman1997}. We shall exemplarily discuss one further application, which deals with descents in finite Coxeter groups, a topic we will  pick up below once again. 

Given a permutation $\pi$ on the set $\{1,\dots,N\}$, the descents of $\pi$ are defined as 
\begin{equation*}
	\text{Des}(\pi)=\big\{i:1\le i<N, \pi(i)>\pi(i+1)\big\}.  
\end{equation*}
Descents of permutations can be regarded as a special type of the more general theory of descents in finite Coxeter groups. For a detailed introduction to Coxeter groups we refer to \cite[Chapter 1]{bjorner2006}. For simplicity, we only consider the following three classical types.

The Coxeter group of type $A_N$ corresponds to the permutations on $\{1, \ldots, N+1\}$. The Coxeter group of type $B_N$ can be realized as the group of signed permutations, that is, the group of all bijections $\pi$ on $\{\pm1,\dots,\pm N\}$, such that $\pi(-i)=-\pi(i)$, see \cite[Chapter~8.1]{bjorner2006}. Following the one-line notation of \cite[Section~2]{kahle2020}, we write $\pi=[\pi(1),\dots,\pi(N)]$, with $\pi(i)\in\{\pm1,\dots,\pm N\}$ and $\{ \vert\pi(1)\vert,\dots,\vert\pi(N)\vert\}=\{1,\dots,N\}$. The Coxeter group of type $D_N$ can then be realized as a subgroup of $B_N$ with an even number of negative entries in the one-line notation, in other words,
\begin{equation*}
	D_N=\big\{ \pi\in B_N\::\:\pi(1)\pi(2)\cdots\pi(N)>0 \big\},
\end{equation*}
see \cite[Chapter~8.2]{bjorner2006}. Note that the groups $A_N, B_N$ and $D_N$ all have rank $N$.

In accordance with Proposition 8.1.2 and Proposition 8.2.2 in \cite{bjorner2006} we set
\begin{equation*}
	\pi(0):=\begin{cases}
		0 & \text{if }\pi\in A_{N-1}\text{ or } B_N, \\
		-\pi(2) & \text{if }\pi\in D_N.
	\end{cases}
\end{equation*}
The descents of an element $\pi\in A_{N-1},B_N,D_N$ can now be defined as
\begin{equation*}
	\text{Des}(\pi):=\big\{i:0\le i<N, \pi(i)>\pi(i+1)\big\},
\end{equation*}
see \cite[Section 2]{kahle2020}. We refer to \cite{meier2022} for a discussion of the more general concept of $d$-descents in the classical types of Coxeter groups.

If we set $W_N$, $W\in\{A,B,D\}$, to be a finite Coxeter group of rank $N$, the number of elements in $W_N$ with exactly $k\in\{0,1,\dots,N\}$ descents is called the $W_N$-Eulerian number, see \cite{kahle2020}. If we define a random variable $X_N$ as the number of descents of an element of $W_N$ chosen uniformly at random, the distribution of $X_N$ is called the $W_N$-Eulerian distribution. The generating function of $X_N$ is given~by
\begin{equation*}
	f_N(z)= \frac{P_N(z)}{P_N(1)},\qquad \text{where } P_N(z) = \prod_{i=1}^N(z+q_i)
\end{equation*}
for some positive real numbers $q_1,\dots,q_N$ (see \cite[Theorem 2.4]{kahle2020}) and hence only has real-valued, negative roots.

\begin{prop}
	For the Coxeter group of type $A_N$, the assumptions of Theorem~\ref{thm:cumulant_bound} are satisfied with $\delta_N = \pi$ and $\sigma_N= (N+2)/12$, so that $\Delta_N = \mathcal{O}(\sqrt{N})$. In particular, $\sqrt{\Delta_N}X_N^\ast$ converges in the mod-Gaussian sense with parameters $\Delta_N$ and limiting function $\Psi(z) = \exp(-c^2\pi^2z^4/240)$, where $c$ is the absolute constant from Theorem~\ref{thm:cumulant_bound}.
\end{prop}

Indeed, according to \cite[Theorem~5.3]{janson2013}, the cumulants of order $m$ are given by 
\begin{equation*}
	\kappa_{m,N}=(N+2){B_m\over m}\qquad(2\le m\le N),
\end{equation*}
where $B_m$ denotes the $m$-th Bernoulli number. In particular, $\kappa_{4,N}=-(N+2)/120$ and $\kappa_{4,N}\Delta_N^2\to-\pi^2c^2/10$ as $N\to\infty$, so that Corollary~\ref{cor:modGauss} \textit{(ii)} can be applied.

Similar results also hold for $W_N=B_N$ and $W_N=D_N$. For instance, note that we always have $\sigma_N^2\sim N/12$ by \cite[Corollary~4.2]{kahle2020}, and hence $\Delta_N= \mathcal{O}(N^{-1/2})$. The corresponding central limit theorems Berry-Esseen bounds are consistent with \cite[Theorem 6.2]{kahle2020} and \cite[Corollary 2.7]{meier2022}.

In fact, as mentioned in \cite[Theorem 2.4]{kahle2020}, the representation of the generating function $f_N(z)$ given above holds for descents in finite Coxeter groups in general and hence all consequences remain valid in such a more general set-up. However, a representation of descents in general finite Coxeter groups would require the study of root systems of these groups and thus significantly complicate the presentation. We refer to \cite{kahle2020} and \cite{meier2022} for a detailed discussion.

\begin{figure}[t!]
	\centering
	\begin{subfigure}{0.5\textwidth}
		\centering
		\includegraphics[scale=0.4]{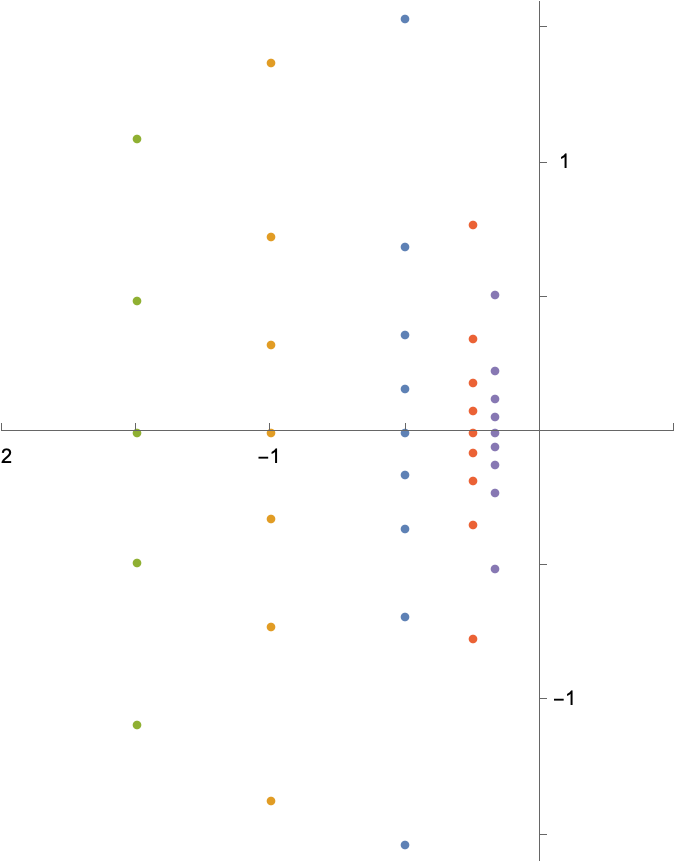}
	\end{subfigure}%
	\begin{subfigure}{0.5\textwidth}
		\centering
		\includegraphics[scale=0.4]{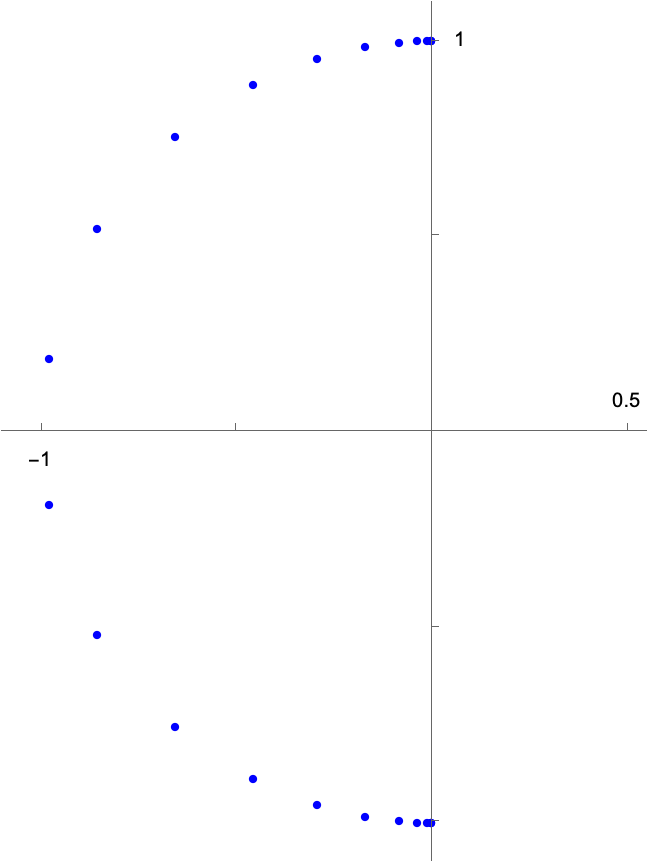}
	\end{subfigure}%
	\caption{Location of the roots of the generating polynomials of the conditional binomial distribution (left panel) with $N=25$, $p=1/2$ (blue), $p=1/3$ (yellow), $p=1/4$ (green), $p=2/3$ (orange) and $p=3/4$ (violet) and of the alternating descents (right panel) with $N=25$ in the complex plane.}
	\label{fig:root_plot_binom_&_alt_desc}
\end{figure}

\subsection{Hurwitz polynomials}
A Hurwitz (or Hurwitz-stable) polynomial is a polynomial all of whose roots $\zeta$ lie in the open left half-plane, i.e., $\mathfrak{Re}(\zeta)<0$. Such a polynomial is again zero-free in $S(\delta)$ for any $\delta \in (0,\pi/2)$. In particular, the polynomials discussed in Section~\ref{ch:negroots} form a subclass of Hurwitz polynomials. In the sequel, we discuss several examples of Hurwitz polynomials with non-real roots. 

\subsubsection{Conditional binomial distribution}
\label{ch:ex:gen_binom_distr}

We say that a random variable $X_N$, $N\in\N$, has conditional binomial distribution $\operatorname{CoBin}(N,p)$ with parameters $N$ and $p \in (0,1)$, if for all $k = 0, \ldots, N-1$,
\begin{equation*}
	\pr(X_N=k):=\pr(Y_N=k\mid Y_N\le N-1),
\end{equation*}
where $Y_N\sim\operatorname{Bin}(N,p)$ has binomial distribution with parameter $p$. This slightly generalizes the random variables discussed in Lemma 4 and Theorem 3 in \cite{belovas2020} (which correspond to the case of $p=1/2$). For each $N$, $X_N$ has probability generating function 
\[
	f_N(z)={1\over 1-p^N}\sum_{k=0}^{N-1} z^k\binom{N}{k} p^k(1-p)^{N-k}
	={\big(p(z-1)+1\big)^N-(zp)^N\over 1-p^N}.
\]
To see that $f_N(z)$ is a Hurwitz polynomial, note that the equation $f_N(z) = 0$ is equivalent to $0 = (zp+(1-p))^N-(zp)^N$, which leads to solving
\begin{equation*}
	\Big(1+\frac{1-p}{zp}\Big)^N=1.
\end{equation*}
Consequently, we obtain
\[
1+\frac{1-p}{z_kp} = \cos\Big(\frac{2\pi k}{N}\Big) + i \sin\Big(\frac{2\pi k}{N}\Big),\quad k=0,\ldots,N-1,
\]
which yields
\begin{equation*}
	z_k=\frac12 \Big(\frac1p-1\Big)\Big(-1-i\cot\Big(\frac{\pi k}{N}\Big)\Big),\qquad k=1,\dots,N-1, 
\end{equation*}
and therefore all roots $\zeta$ of $f_N(z)$ satisfy $\mathfrak{Re}(\zeta)=(1-1/p)/2<0$. The location of these roots for various $p$ is shown in the left panel of Figure~\ref{fig:root_plot_binom_&_alt_desc}.

\begin{prop}
	For any fixed $p \in (0,1)$, the assumptions of Theorem~\ref{thm:cumulant_bound} are satisfied with $\delta_N=\pi/2$ and $\sigma_N^2=Np(1-p)+o(1)$, so that $\Delta_N=\mathcal{O}(\sqrt{N})$. In particular, $\Delta_N^{1/3}X_N^\ast$ converges in the mod-Gaussian sense with parameters $\Delta_N^{2/3}$ and limiting function $\Psi(z)= \exp((1-2p)c\pi z^3/12)$, where $c$ is the absolute constant from Theorem~\ref{thm:cumulant_bound}.
\end{prop}

Indeed, it is straightforward to show that
\begin{align*}
	\E X_N=N\frac{p-p^N}{1-p^N}\qquad\text{and}\qquad \sigma_N^2=N\frac{p(1-p)}{1-p^N}-N^2\frac{p^N((p-1)^2)}{(1-p^N)^2}.
\end{align*}
Moreover, 
\begin{equation*}
	\kappa_{3,N}=Np(1-p)(1-2p)+o(1),
\end{equation*}
which yields $\kappa_{3,N}^\ast\sigma_N=\kappa_{3,N}\sigma_N^{-2}\rightarrow1-2p$ as $N\to\infty$, so that mod-Gaussian convergence follows from Corollary~\ref{cor:modGauss} \textit{(i)}.

\subsubsection{Ehrhart polynomials}

Taking $p=1/2$ in the previous example, we saw that all roots of the probability generating function $f_N$ of $X_N$ were located on the line $\{z\in\C:\mathfrak{Re}(z)=-1/2\}$. A similar property has become rather prominent in the theory of lattice polytopes of which we can outline only selected aspects and refer to \cite{beck_robins2015} for further details. We recall that a polytope $Q\subset\R^N$ is called a lattice polytope if all of its vertices have integer coordinates. A lattice polytope $Q$ is reflexive if also its convex dual $Q^\circ:=\{x\in\R^N:\langle x,y\rangle\leq 1\ \text{for all}\ y\in Q\}$ is a lattice polytope, where $\langle\,\cdot\,,\,\cdot\,\rangle$ denotes the standard scalar product in $\R^N$. It is well known and the content of Ehrhart's theorem \cite[Theorem 3.8]{beck_robins2015} that for a lattice polytope $Q$ the counting function
$$
E_Q(k):=|\{t\in\mathbb{Z}^N:t\in kQ\}|,\qquad k\in\N_0,
$$
is the evaluation of a polynomial $E_Q(z)$, $z\in\R$, of degree $N$, the so-called Ehrhart polynomial of $Q$. To interpret such a polynomial as the probability generating polynomial of a random variable, we will assume from now on that $Q$ is Ehrhart positive, meaning that all coefficients of $E_Q(z)$ are positive. Finally, a lattice polytope $Q$ is called a CL-polytope, provided that all roots of its Ehrhart polynomial are located on the so-called canonical line $\{z\in\C:\mathfrak{Re}(z)=-1/2\}$. 

For example, if $W_N:=[0,1]^N$ is the $N$-dimensional unit cube, then
$$
E_{W_N}(z)=(z+1)^N,
$$
see \cite[Theorem 2.1]{beck_robins2015}. From this we conclude that the random variable with probability generating function $E_{W_N}(z)/E_{W_N}(1)=(z+1)^N/2^N$ follows a binomial distribution with parameter $N$ and $1/2$. However, since $z=-1$ is the only root of $E_{W_N}$, $W_N$ is not a CL-polytope, but it corresponds to a real-rooted polynomial as discussed in Section~\ref{ch:negroots}.

Other examples connected to the binomial distribution are root polytopes of type $A$ and $C$ and their convex duals. If $e_1,\ldots,e_N$ denotes the standard orthonormal basis in $\R^N$ the root polytopes of type $A$ and $C$ are defined, respectively, as the convex hulls of the classical root systems of type $A$ and $C$, that is, 
\begin{align*}
	A_N &:= \mathrm{conv}(\{\pm (e_i+\ldots+e_j):1\leq i\leq j\leq N\}),\\
	C_N &:= \mathrm{conv}(\{\pm (2e_i+\ldots+2e_{N-1}+e_N):1\leq i\leq N-1\}\cup\{\pm e_N\}).
\end{align*}
The Ehrhart functions of both $A_N$ and $C_N$ are known explicitly, showing in particular that both polytopes are of class CL, see \cite[Sections 3 and 5]{higashitani_kummer_michalek2017} . For the dual convex polytopes $A_N^\circ$ and $C_N^\circ$ from \cite[Lemma 5.3]{higashitani_kummer_michalek2017} and \cite[Theorem 1.1]{higashitani_yamada2021} one has that
$$
E_{A_N^\circ}(z) = (z+1)^{N+1} - z^{N+1}\qquad\text{and}\qquad E_{C_N^\circ}(z) = (z+1)^{N} + z^{N},
$$
implying that also $A_N^\circ$ and $C_N^\circ$ are CL-polytopes. Moreover, up to the normalization constant $E_{A_{N-1}^\circ}(1)=2^N-1$, $E_{A_{N-1}^\circ}(z)$ coincide with the probability generating function $f_N$ for $p=1/2$ considered in the previous section. In other words, the random variable connected to $A_N^\circ$ has the conditional binomial distribution $\mathrm{CoBin}(N,1/2)$. For a probabilistic interpretation of $E_{C_N^\circ}(z)$, let $U_N$ and $V_N$ be independent random variables, $U_N$ having a binomial distribution with parameters $N$ and $1/2$, while $V_N$ has a Bernoulli distribution with success probability $2^N/(2^N+1)$. Consider the random variable $N-U_NV_N$. By conditioning on the value of $V_N$, one easily check that its probability generating function coincides with $E_{C_N^\circ}(z)/E_{C_N^\circ}(1)$.

The Ehrhart polynomials of CL-polytopes are thus Hurwitz polynomials and the random variables generated by the coefficients of the normalizations of these polynomials satisfy the asymptotic distributional results derived in Section \ref{ch:main_results}.


\subsubsection{Alternating descents in permutations}
\label{ch:ex:Alternating descents}

Recalling the definition of descents of a permutation $\pi$ on the set $\{1,\dots,N\}$ from Section~\ref{ch:negroots}, the alternating descents of $\pi$ are the positions $i$ such that $\pi(i)>\pi(i+1)$ if $i$ is odd, or $\pi(i)<\pi(i+1)$ if $i$ is even. 
That is, an alternating descent is a descent if $i$ is odd and an ascent if $i$ is even. It was shown in \cite{chebikin2008} that the number of alternating descents agrees with the number of $3$-descents of permutations on $\{1,\ldots, N+1\}$ with $\pi(1) = 1$. Here, a $3$-descent is an index $i$ such that $\pi(i)\pi(i+1)\pi(i+2)$ forms an odd permutation of size $3$, i.e., it has one of the patterns $132$, $213$, or $321$.

Let $X_N$ be the number of alternating descents of a permutation $\pi$ on $\{1,\ldots,N\}$ chosen uniformly at random. In \cite[Theorem 4]{ma2015}, it was shown that all the roots $\zeta$ of the generating function $f_N(z)$ of $X_N$ satisfy $\verts{\zeta}=1$ and have a strictly negative real part. That is, they all lie on the left half on the unit circle in the complex plane, see the right panel of Figure~\ref{fig:root_plot_binom_&_alt_desc}. In other words, $f_N(z)$ is a Hurwitz polynomial.

\begin{prop}
	The assumptions of Theorem~\ref{thm:cumulant_bound} are satisfied with $\delta_N=\pi/2$ and $\sigma_N^2 =(5N+3)/12$, so that $\Delta_N= \mathcal{O}(\sqrt{N})$. In particular, $\sqrt{\Delta_N}X_N^\ast$ converges in the mod-Gaussian sense with parameters $\Delta_N$ and limiting function $\Psi(z) = \exp(-27\pi^2 c^2z^4/1600)$, where $c$ is the absolute constant from Theorem~\ref{thm:cumulant_bound}.
\end{prop}

Indeed, note that $\kappa_{3,N}=0$ and $\kappa_{4,N}=-(81N-78)/120$ for $N\ge3$ as can be computed from the explicit exponential generating function reported in \cite[Equation (1)]{ma2015} or \cite[Section 5.4.2]{hwang_chern_duh2020}. This leads to $\kappa_{4,N}^\ast\Delta_N^2\to-\frac{81c^2\pi^2}{200}$ as $N\to\infty$, which establishes mod-Gaussian convergence by Corollary~\ref{cor:modGauss} \textit{(ii)}.

In particular, our results confirm the speed of convergence in the central limit theorem established in \cite[Section 5.4.2]{hwang_chern_duh2020}.

\subsection{Root unitary polynomials}
\label{ch:examples_special case}

In this section, we discuss various examples of polynomials which are zero-free in some sector $S(\delta_N)$ with $\delta_N \to 0$ as $N \to \infty$. In particular, these examples provide natural appearances of generating functions that are root-unitary polynomials as studied in Section~\ref{ch:special case}.

\subsubsection{Inversions in Coxeter groups}
\label{ex:general_coxeter_groups}

\begin{figure}[t!]
	\centering
	\begin{subfigure}{0.5\textwidth}
		\centering
		\includegraphics[scale=0.4]{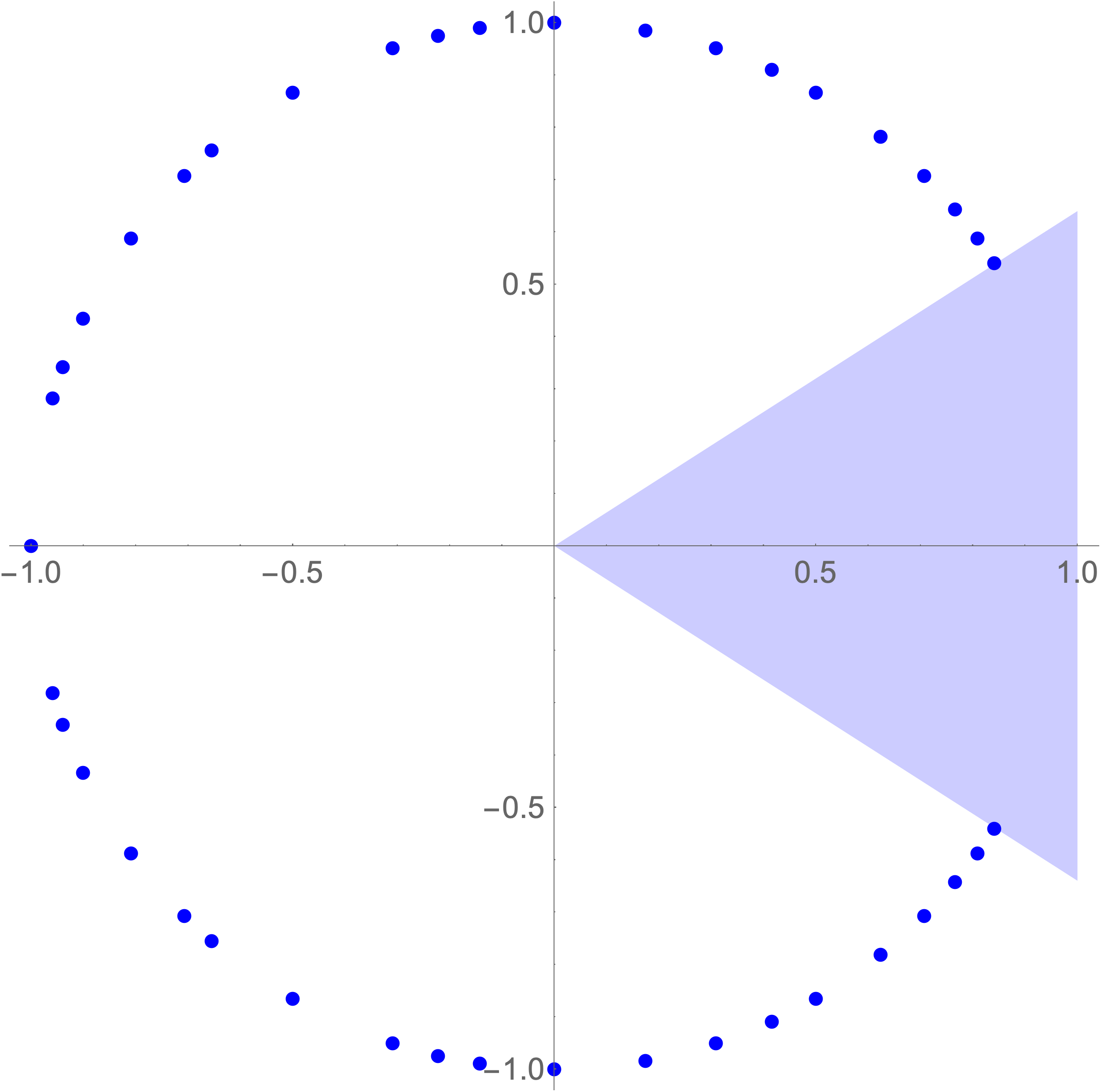}
	\end{subfigure}%
	\begin{subfigure}{0.5\textwidth}
		\centering
		\includegraphics[scale=0.4]{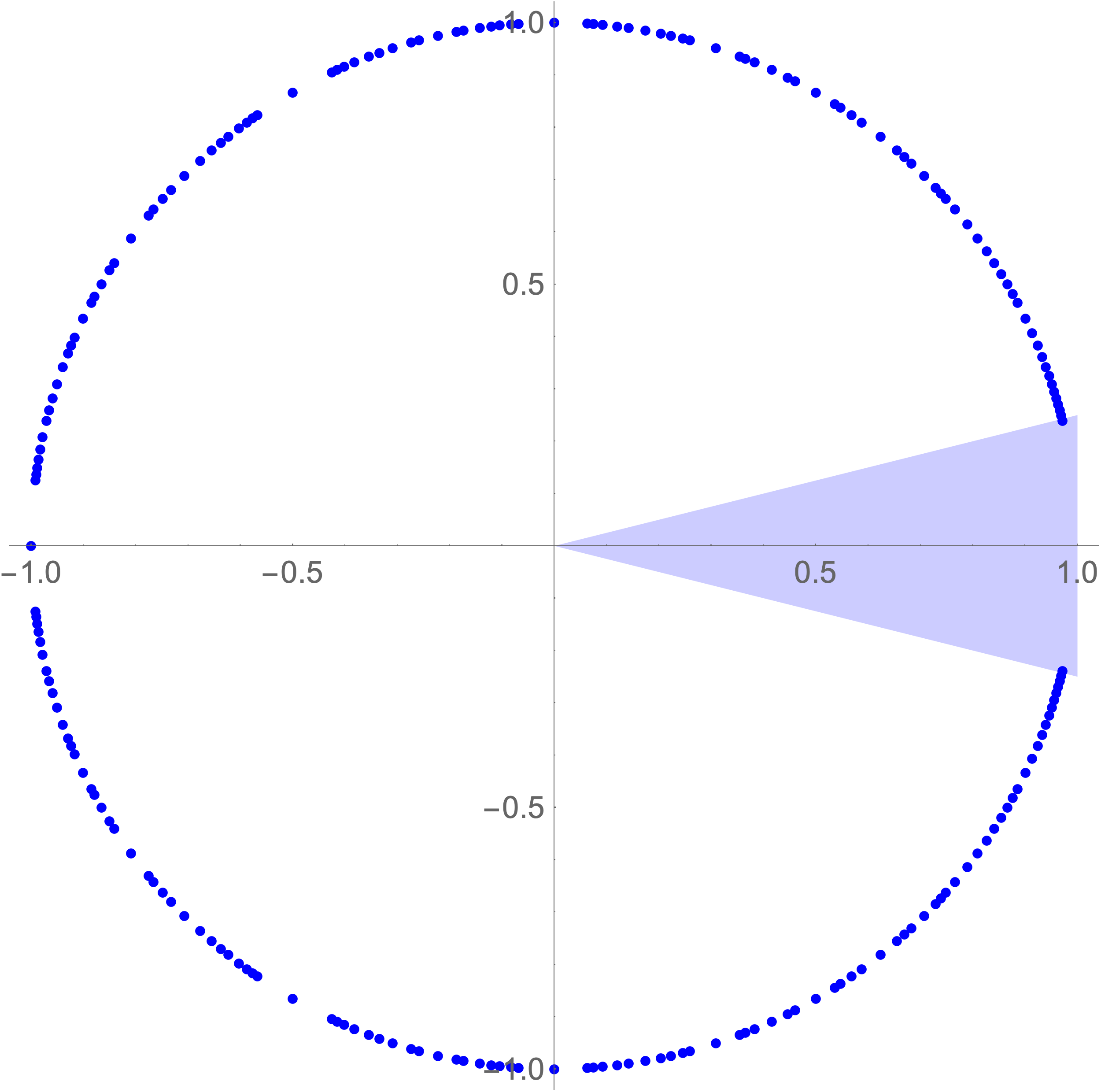}
	\end{subfigure}%
	\caption{Location of the roots of the generating polynomial of the number of inversions in a Coxeter group of type $A_N$ with $N=10$ (left panel) and $N=25$ (right panel) in the complex plane. The sector $S(\delta_N)$ is shaded in blue.}
	\label{fig:root_plot_inversions}
\end{figure}

An inversion of a permutation $\pi$ on the set $\{1,\dots,N\}$ is a pair $(i,j)$ with $1\le i<j\le N$ and $\pi(i)>\pi(j)$. Similar to descents in Section~\ref{ch:negroots}, inversions of permutations can be regarded as a special type of the more general theory of inversions in finite Coxeter groups, where as above we only consider the three classical types $A_N,B_N$ and $D_N$. 
For any of these three groups, certain elements may be identified as inversions. Indeed, according to the notation in \cite[Section~2]{kahle2020}, we set
\begin{align*}
	\operatorname{inv}^+(\pi)&:= \{ 1\le i<j\le N:\pi(i)>\pi(j) \},\\
	\operatorname{inv}^-(\pi)&:= \{ 1\le i<j\le N:-\pi(i)>\pi(j) \},\\
	\operatorname{inv}^\circ(\pi)&:= \{ 1\le i\le N:\pi(i)<0 \}.
	\intertext{Then, the inversions in an element $\pi \in W_N$, $W \in \{A,B,D\}$ are defined as}
	\operatorname{inv}_{A_{N-1}}(\pi)&:=\operatorname{inv}^+(\pi),\\
	\operatorname{inv}_{B_N}(\pi)&:=\operatorname{inv}^+(\pi)\cup\operatorname{inv}^-(\pi)\cup\operatorname{inv}^\circ(\pi),\\
	\operatorname{inv}_{D_N}(\pi)&:=\operatorname{inv}^+(\pi)\cup\operatorname{inv}^-(\pi).
\end{align*}
We also refer to \cite[Section~2]{meier2022} for a discussion of more general $d$-inversions in the classical types of Coxeter groups.

The numbers of elements in $W_N$, $W\in\{A,B,D\}$, with exactly $k\in\{0,1,\ldots,N\}$ 
inversions are called the $W_N$-Mahonian numbers. If we define a random variable $X_N:=X(W_N)$ as the number of inversions of an element of $W_N$ chosen uniformly at random, the distribution of $X_N$ is called the $W_N$-Mahonian distribution. The generating polynomial for $X_N$ is given by
\begin{equation}
	P_N(z)=\prod_{1\le j\le N}\frac{1-z^{d_j}}{1-z}\qquad(z\in\R),
	\label{eq:example_coxeter_generating_function}
\end{equation}
where $d_1, \ldots, d_N$ are the degrees of $W_N$, see \cite[Theorem~7.1.5]{bjorner2006} and note that the exponents $e_j$ of  Coxeter groups therein are related to the degrees by $d_j=e_j+1$. The degrees of the three types of Coxeter groups we consider are summarized in the following table.

\begin{table}[h]
	\renewcommand{\arraystretch}{1.5}
	\centering
	\begin{tabular}{c|l}
		Type&Degrees\\
		\hline
		$A_N$&$d_1=2,d_2=3,\dots,d_N=N+1$\\
		$B_N$&$d_1=2,d_2=4,\dots,d_N=2N$\\
		$D_N$&$d_1=2,d_2=4,\dots,d_{N-1}=2N-2,d_N=N$
	\end{tabular}
\end{table}
\renewcommand{\arraystretch}{1.0}

In particular, the polynomials \eqref{eq:example_coxeter_generating_function} are of the form \eqref{eq:sc_gen_polynomial} with $b_j=d_j$, so that $M_N=\max_{1\le j\le N}d_j$, and $a_j=1$ for $j\in\{1,\ldots,N\}$, and have degree $n=\sum_{1\le j\le N}(d_j-1)$. We refer to Figure~\ref{fig:root_plot_inversions} for an illustration of the zero set in the case $W_N=A_N$.

\begin{prop}\label{prop:DeltaInv}
	The assumptions of Theorem~\ref{thm:sc_main_theorem} are satisfied with
	\[
		\Delta_N=\begin{cases}
			\frac{\pi^2}{12}\sqrt{\frac73}\frac{(2N^3+9N^2+7N)^{1/2}}{N+1}&:\text{type }A_N\\
			\frac{\pi^2}{12}\sqrt{\frac76}\frac{(4N^3+6N^2-N)^{1/2}}{N}&:\text{type }B_N\\
			\frac{\pi^2}{12}\sqrt{\frac76}\frac{(4N^3-3N^2-N)^{1/2}}{N-1}&:\text{type }D_N,
		\end{cases}
		\label{eq:example_coxeter_delta_inequality}
	\]
	so that in all these cases, $\Delta_N= \mathcal{O}(\sqrt{N})$. In particular, $\sqrt{\Delta_{N}}X_N^\ast$ converges in the mod-Gaussian sense with parameters $\Delta_N$ and limiting function $\Psi(z) = \exp(-7\pi^4z^4/2400)$ independently of the type $W\in\{A,B,D\}$.
\end{prop}

To see this, note that the cumulants of $X_N$ admit the representation \eqref{eq:sc_cumulant_representation}. In particular, for each of the three possible choices for $W$ we consider, elementary calculations lead to expressions for $\sigma_N^2$, $M_N$ and $\kappa_{4,N}^\ast$ which are summarized in the following table. In all these cases, one moreover easily verifies that $\kappa_{4,N}^\ast\Delta_N^2\to-7\pi^4/100$ as $N\to\infty$, which yields mod-Gaussian convergence by Corollary~\ref{cor:modGauss}~\textit{(ii)}.

\begin{table}[h!]
	\renewcommand{\arraystretch}{1.5}
	\centering
	\begin{tabular}{c|l|l|l}
		Type & $\sigma_N^2$ & $M_N$ & $\kappa_{4,N}^*$\\
		\hline
		$A_N$&$\frac{1}{72} \left(2 N^3+9 N^2+7 N\right)$&$N+1$ & $-\frac{36}{25} \frac{6 N^5+45 N^4+130 N^3+180 N^2+89 N}{4 N^6+36 N^5+109 N^4+126 N^3+49 N^2}$\\
		$B_N$&$\frac{1}{36} \left(4 N^3+6 N^2-N\right)$&$2N$ & $-\frac{18}{25} \frac{48 N^5+120 N^4+80 N^3-23 N}{16 N^6+48 N^5+28 N^4-12 N^3+N^2}$\\
		$D_N$&$\frac{1}{36} \left(4 N^3-3 N^2-N\right)$&$2N-2$ & $-\frac{18}{25} \frac{48 N^4-105 N^3+80 N^2-23}{16 N^5-24 N^4+N^3+6 N^2+N}$\\
	\end{tabular}
\end{table}
\renewcommand{\arraystretch}{1}

The central limit theorem which follows from Proposition~\ref{prop:DeltaInv} agrees with the results found in \cite[Theorem~2.8, Corollary~2.9]{meier2022}, where generalized inversions in finite Weyl groups are studied. Figure~\ref{fig:clt_plot_inversions} shows the number of inversions in a sample of $350,000$ random permutations of $N=50$ elements (remind that the Coxeter group of Type $A_N$ corresponds to the symmetric group on $\{1,\dots,N+1\}$). We note in this context that in Remark 6.9 in \cite{kahle2020} the authors already mention without proof and without giving details a possible mod-Gaussian convergence for the sequence of random variables $X_N$.

\begin{figure}[h!]
	\includegraphics[width=0.5\textwidth]{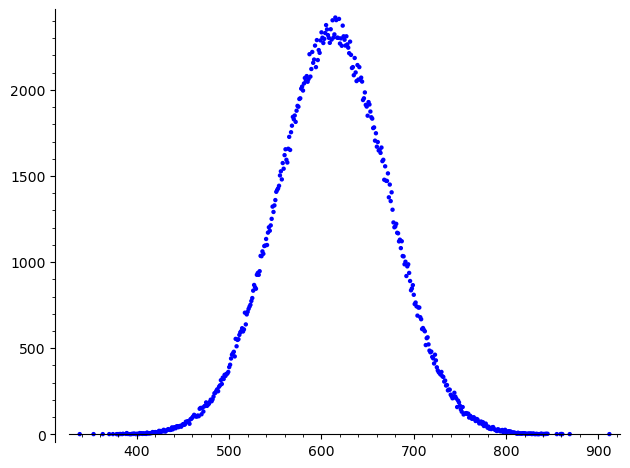}
	\caption{Simulation of the number of inversions of a permutation of $N=50$ elements, drawn from $350,000$ samples.}
	\label{fig:clt_plot_inversions}
\end{figure}

\subsubsection{Gaussian polynomials}
\label{ex:gaussian_polynomials}

Let $\gausspolcoeff$ denote the number of partitions of an integer $j$ into at most $\ell$ summands, each of which is less than or equal to $N$ (in particular, $p(N,\ell,N\ell)=1$ and $\gausspolcoeff=0$ if $j>N\ell$). According to \cite[Theorem~3.1]{andrews1998}, the corresponding generating polynomial $\gausspol$ is given by
\begin{equation}
	\gausspol=\sum_{0\le j \le N \ell}\gausspolcoeff z^j=\prod_{1\le j\le N}\frac{1-z^{j+\ell}}{1-z^j}\qquad(z\in\R).
	\label{eq:example_gaussian_generating_pol}
\end{equation}
These polynomials were historically first studied by Gauss and hence are called Gaussian polynomials, see for example \cite{andrews_gauss_1991}.

The right hand side of \eqref{eq:example_gaussian_generating_pol} matches \eqref{eq:sc_gen_polynomial} with $b_j=\ell+j$ and $a_j=j$, and thus $G(N,\ell;z)$ is a polynomial of degree $n=\sum_{1\le j\le N}\ell=N\ell$ (note the additional dependence on the parameter $\ell$). In particular, the generating function of Gaussian polynomials does not factorize as required in \cite[Lemma 4]{hwang1998}, so that the latter result cannot be applied in this situation in order to deduce a Berry-Esseen bound for the coefficients of a Gaussian polynomial.

\begin{prop}
	The assumptions of Theorem~\ref{thm:sc_main_theorem} are satisfied with $\sigma_{N,\ell}^2=(\ell^2 N+\ell N+\ell N^2)/12$, $M_{N,\ell}=N+\ell$ and
	\[
	\Delta_{N,\ell} =\frac{\pi^2}{6}\sqrt{\frac{7}{2}}\;\frac{\sqrt{\ell N (\ell+N+1)}}{ (\ell+N)} = \mathcal{O}\big( N^{-1/2}+\ell^{-1/2} \big).
	\]
	In particular, $\sqrt{\Delta_{N,\ell}}X_{N,\ell}^\ast$ converges in the mod-Gaussian sense as $N,\ell \to \infty$ with parameters $\Delta_{N,\ell}$ and limiting function $\Psi(z) = \exp(-35K\pi^4z^4/10368)$, where $K \in \mathbb{R}$ is the limit of $1+ (\ell+N-\ell N)/(N+\ell)^2$ as $N,\ell \to \infty$.
\end{prop}

To see this, note that by \eqref{eq:sc_cumulant_representation} the cumulants are given by
\begin{align}	
	\label{eq:example_gaussian_cum_representation}
	\kappa_{m,N,\ell}&=\frac{ B_{m}}{m}\sum_{1\le j\le N}\big((j+\ell)^{m}-j^{m}\big)\\
	&=\frac{ B_{m}}{m}\left(-H_N^{(-m)}-\zeta (-m,\ell+N+1)+\zeta (-m,\ell+1)\right),\nonumber
\end{align}
where $H_N^{(-m)}=\sum_{0\le k\le N}k^m$ is the $N$-th harmonic number of order $-m$ and $\zeta(-m,a)$ denotes the Hurwitz zeta function $\zeta(-m,a)=-\mathcal{B}_{m+1}(a)/(m+1)$ for any $a\in\R$ and
with $\mathcal B_{m}(\,\cdot\,)$ denoting the $m$-th Bernoulli polynomial. Using that $\mathcal B_3(a)=a^3-\frac{3}{2}a^2+\frac{1}{2}a$ (cf.\ e.\,g.\ \cite[Chapter~23]{abramowitz1964}), one readily calculates the variance.

Similarly, using \eqref{eq:example_gaussian_cum_representation} as well as $\mathcal B_5(a)=a^5-{5\over 2}a^4+{5\over 3}a^3-{1\over 6}a$, we find
\begin{align*}
	&\kappa_{4,N,\ell}=-\frac{1}{120} \left(-\frac{N^5}{5}-\frac{N^4}{2}-\frac{N^3}{3}+\frac{N}{30}+\frac{\mathcal{B}_{5}(\ell+N+1)}{5}-\frac{\mathcal{B}_{5}(\ell+1)}{5}\right),
\end{align*}
which leads to 
\begin{equation*}
	\kappa_{4,N,\ell}^\ast=-\frac{6}{5} \left(\frac{1}{N}+\frac{1}{\ell}-\frac{1}{\ell+N+1}\right),
\end{equation*}
after straightforward simplifications. As a consequence,
\begin{equation*}
	\kappa_{4,N,\ell}^\ast \Delta_{N,\ell}^2
	= - \frac{35\pi^4}{432} \Big(1 + \frac{\ell+N-\ell N}{(\ell+N)^2}\Big) \longrightarrow - \frac{35\pi^4}{432} K
\end{equation*}
as both $N\to\infty$ and $\ell\to\infty$. For instance, if $\ell = N$, we have $K= 3/4$.

Note that a central limit theorem for the random variables $X_{N,\ell}$ has previously been shown in \cite[Section~4]{mann1947} and \cite[Section~3]{takacs1986}, while the other results (especially the Berry-Esseen bound and mod-Gaussian convergence) seem new.

\subsubsection{Catalan numbers}
\label{ex:catalan_numbers}

There are several ways of generalizing the usual Catalan numbers
$C_N=\frac{1}{N+1}\binom{2N}{N}$ to $q$-Catalan numbers. One of them reads
\begin{equation*}
	C_N(q)=\frac{1}{[N+1]_q}\left[{2N}\atop{N}\right]_q,
\end{equation*}
where $[N]_q:=1+q+\dots+q^{N-1}$ and 
\[
\left[{{N}\atop{k}}\right]_q:=\frac{[N]_q!}{[k]_q![N-k]_q!},
\]
with $[N]_q! := [N]_q[N-1]_q \cdots [1]_q$. For a broader view on $q$-Catalan numbers we refer the reader to \cite{haiman1998} and to \cite{carlitz_riordan_1965} for further background information. In the sequel, we assume $N\ge2$. It is easy to check that
\begin{equation*}
	C_N(q)=\prod_{2\le j\le N}\frac{1-q^{N+j}}{1-q^{j}}.
\end{equation*}
which shows that $q$-Catalan numbers are closely related to the Gaussian polynomials \eqref{eq:example_gaussian_generating_pol}. Indeed, $C_N(q) = G(N,N;q)/[N+1]_q$. In particular, $C_N(q)$ is a polynomial as in \eqref{eq:sc_gen_polynomial} with $b_1=1$, $a_1=1$, $b_j=N+j$ and $a_j=j$ for $2\le j\le N$.

\begin{prop}
	The assumptions of Theorem \ref{thm:sc_main_theorem} are satisfied with $\sigma_N^2 = (N^3-N)/6$, $M_N = 2N$ and
	\[
	\Delta_N=\frac{\pi^2\sqrt{7}}{12} \sqrt{\frac{N^2-1}{N}} = \mathcal{O}(\sqrt{N}).
	\]
	In particular, $\sqrt{\Delta_{N}}X_N^\ast$ converges in the mod-Gaussian sense with parameters $\Delta_{N}$ and limiting function $\Psi(z) = \exp(-7\pi^4z^4/1920)$.
\end{prop}

To see this, note that the cumulants are given by
\begin{equation}
	\kappa_{m,N}=\frac{B_m}{m}\sum_{2\le j\le N}\big( (N+j)^m-j^m \big),
	\label{eq:example_q-catalan_kappa}
\end{equation}
so that the expressions for $\sigma_N^2$ and $\Delta_N$ directly follow. Moreover, mod-Gaussian convergence follows from noting that by \eqref{eq:example_q-catalan_kappa},
\begin{equation*}
	\kappa_{4,N}=-\frac{1}{60}\big(3N^5+3N^4-N^3-3N^2-2N\big),\qquad
	\kappa_{4,N}^\ast=-\frac{3}{5}\Big( \frac{3N^2+3N+2}{N^3-N} \Big),
\end{equation*}
and hence we obtain
\begin{equation*}
	\kappa_{4,N}^\ast\Delta_N^2=-\frac{7\pi^4}{270}\frac{3N^2+3N+2}{N^2} \longrightarrow-\frac{7\pi^4}{80}\qquad \text{as } N \to \infty.
\end{equation*}

Central limit theorems for $(X_N^\ast)_{N\ge1}$ have previously been shown in \cite[Corollary 3.3]{chen2008}, see also \cite[Theorem 3.1]{billey_konvalinka_swanson2020}. Moreover, our results recover the moderate deviation principle established in \cite[Theorem 2.1]{wu2014}, while they answer the question for a Berry-Esseen bound for $q$-Catalan numbers which was raised in \cite[Remark 4.2]{wu2014}.

\begin{remark}
	Generalizing the usual Catalan numbers $C_N$ in another way, one may define for integers $k\geq 1$ the $k$-Catalan numbers (or Fuss--Catalan numbers) by
	\begin{equation*}
		C_{N,k}=\frac{1}{(k-1)N+1}\binom{kN}{N},
	\end{equation*}
	see \cite{StanleyBook}. Given the $q$-Catalan numbers, it is natural to define
	\begin{equation*}
		C_{N,k}(q):=\frac{1}{[(k-1)N+1]_q}\left[{kN}\atop{N}\right]_q = \prod_{2\le j\le N}\frac{1-q^{(k-1)N+j}}{1-q^j},
	\end{equation*}
	using the same notation as before. The second representation is a polynomial of the form \eqref{eq:sc_gen_polynomial}. Hence, generalizing the results from above, one can show that
	\begin{equation*}
		\Delta_{N,k}=\sqrt{\frac{7}{6}}\;\pi^2\frac{\sigma_{N,k}}{M_{N,k}}=\pi^2\sqrt{\frac{7}{2}}\frac{ \sqrt{(k-1) (N-1) N (k N+2)}}{6 k N}.
	\end{equation*}
	In particular, for any choice of $k > 1$ by Corollary~\ref{cor:clt_&_berry_esseen_bound}, the sequence $(X_N^\ast)_{N\ge1}$ satisfies a central limit theorem as $N \to \infty$ as previously shown in \cite[Section 3]{chen2008}. Noting that $\Delta_{N,k}$ can always be chosen of order $\mathcal{O}(N^{-1/2})$, all the other results from Section~\ref{ch:main_results} follow readily.
\end{remark}

\subsubsection{Descending plane partitions}
\label{ex:descending_plane_partitions}

A descending plane partition (DPP for short and first introduced by \cite{andrews1979}) is an array of non-negative integers $\gamma_{i,j}$ ($i\le j$), which are called the parts of the DPP, as in Figure~\ref{fig:descending_plane_partition}, such that the following conditions hold:
\begin{enumerate}
	\item[(D1)] The values of the parts are decreasing in each row from left to right and strictly decreasing in each column from top to bottom. In particular, $\gamma_{i,i}$ is the largest part of the $i$-th row and the $i$-th column.
	\item[(D2)] The entry $\gamma_{i,i}$ is strictly greater than the number of parts in the $i$-th row and less or equal to the number of parts in the $(i-1)$-th row.
\end{enumerate}
We refer to \cite[Section~1]{striker2011} or \cite[Section~1]{mills1983} for a detailed introduction. A descending plane partition is said to be of order $N$, if its largest part is at most $N$. For instance, there are two DPPs of order $N=2$ and seven DPPs of order $N=3$. Note that $\emptyset$ always counts as a DPP (the empty one). If $\gamma_{i,j} \le j-i$, $\gamma_{i,j}$ is called a special part of the DPP. See Figure~\ref{fig:descending_plane_partition_order_25} for a DPP of order $N=12$ (and thus also of any order $N>12$), where the specials parts are marked in bold.

Writing $S := \sum_{i,j} \gamma_{i,j}$ for the sum of the parts, a DPP of order $N$ can be regarded as a partition of $S$ whose largest part is at most $N$, see especially \cite[Lemma 5]{striker2011} which translates (D1) and (D2) into conditions on the partition for DPPs with no special parts. Descending plane partitions moreover admit connections to other combinatorial structures. For instance, in \cite{ayyer2010} a bijection between DPPs of order $N$ with no special parts and certain permutations of $N$ elements is established. There is also a relation of DPPs to alternating sign matrices as studied in \cite{mills1983,mills1982}.

\begin{figure}[t!]
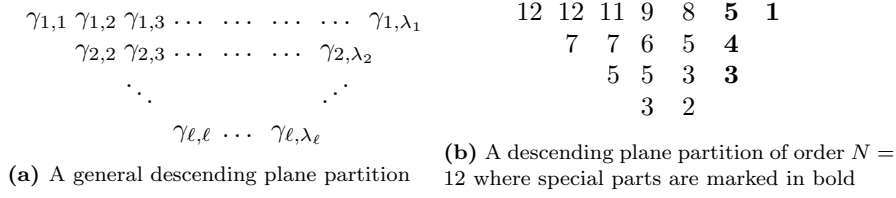

	\begin{minipage}{6cm}
		\centering
		\begin{tabular}{p{0.3cm}p{0.3cm}p{0.3cm}p{0.3cm}p{0.3cm}p{0.3cm}p{0.3cm}p{0.3cm}}
			$\gamma_{1,1}$ & $\gamma_{1,2}$ & $\gamma_{1,3}$ & \dots & \dots & \dots & \dots & $\gamma_{1,\lambda_1}$ \\
			& $\gamma_{2,2}$ & $\gamma_{2,3}$ & \dots & \dots & \dots & $\gamma_{2,\lambda_2}$ &\\
			& & $\ddots$ & & & & \reflectbox{$\ddots$}& \\
			& & & $\gamma_{\ell,\ell}$ & $\dots$ & $\gamma_{\ell,\lambda_\ell}$
		\end{tabular}
		\subcaption{A general descending plane partition}
		\label{fig:descending_plane_partition}
	\end{minipage}
	\begin{minipage}{6cm} 
		\centering
		\begin{tabular}{p{0.2cm}p{0.2cm}p{0.2cm}p{0.2cm}p{0.2cm}p{0.2cm}p{0.2cm}p{0.2cm}}
			$12$&$12$&$11$&$9$&$8$&$\boldsymbol{5}$&$\boldsymbol{1}$&\\
			&$\;7$&$\;7$&$6$&$5$&$\boldsymbol{4}$&&\\
			&&$\;5$&$5$&$3$&$\boldsymbol{3}$&&\\
			&&&$3$&$2$&&&\\
		\end{tabular}
		\subcaption{A descending plane partition of order $N=12$ where special parts are marked in bold}
		\label{fig:descending_plane_partition_order_25}
	\end{minipage}
	\caption{Descending plane partitions}
\end{figure}

According to \cite[Corollary~6]{striker2011}, the generating polynomial for DPPs of order $N$ with no special parts is given by
\begin{equation}
	\prod_{1\le j\le N}[j]_{q^j}=\prod_{1\le j\le N}\frac{1-q^{j^2}}{1-q^j},
	\label{eq:descending_plane_partitions_generating_function}
\end{equation}
where we recall the notation $[j]_q=1+q+q^2+\dots+q^{j-1}$ from Section \ref{ex:catalan_numbers}. This generating polynomial also appears in the context of the inversion index of a permutation of $N$ elements as shown in \cite{striker2011} using permutations matrices (see also [\href{http://www.findstat.org/St000616}{St000616}] in \cite{findstat}). In particular, \eqref{eq:descending_plane_partitions_generating_function} matches \eqref{eq:sc_gen_polynomial} with $b_j=j^2$ and $a_j=j$.

\begin{prop}
	The assumptions of Theorem~\ref{thm:sc_main_theorem} are satisfied with $\sigma_N^2 = (2 N^5+5 N^3-5 N^2-2N)/120$, $M_N=N^2$ and
	\begin{equation*}
		\Delta_N=\frac{\pi^2}{12}\sqrt{\frac{7}{5}}\;\frac{\left(2 N^5+5 N^3-5 N^2-2N\right)^{1/2}}{N^2} = \mathcal{O}(\sqrt{N}).
	\end{equation*}
	In particular, $\sqrt{\Delta_{N}}X_N^\ast$ converges in the mod-Gaussian sense with parameters $\Delta_{N}$ and limiting function $\Psi(z) = \exp(-7\pi^4z^4/2592)$.
\end{prop}

Indeed, the cumulants are given by
\begin{equation*}
	\kappa_{m,N}=\frac{B_m}{m}\sum_{1\le j\le N}\big( j^{2m}-j^m \big),
\end{equation*}
so that one readily calculates the variance and $\Delta_N$. Furthermore, we have
\begin{align*}
	\kappa_{4,N}&=-\frac{1}{2160}\left(2 N^9+9 N^8+12 N^7-12 N^5-9 N^4-2 N^3\right),\\
	\kappa_{4,N}^\ast&=-\frac{20 (N^2+N)}{6 N^3+9 N^2-9 N-6},
\end{align*}
and hence,
\begin{equation*}
	\kappa_{4,N}^\ast\Delta_N^2=-\frac{7\pi^4}{36}\frac{(N^2+N)(2 N^5+5 N^3-5 N^2-2N)}{N^4(6 N^3+9 N^2-9 N-6)}  \longrightarrow -\frac{7\pi^4}{108},
\end{equation*}
as $N\to\infty$, which yields mod-Gaussian convergence.

All these results seem new. In particular, Corollary \ref{cor:clt_&_berry_esseen_bound} provides a central limit theorem for the number of DPPs with largest part at most $N$ with speed of convergence of order $\mathcal{O}(N^{-1/2})$. See Figure~\ref{fig:clt_plot_inversion_index} for a plot of $350,000$ samples with $N=50$. The Gaussian shape is clearly visible.

\begin{figure}[t!]
	\includegraphics[width=0.5\textwidth]{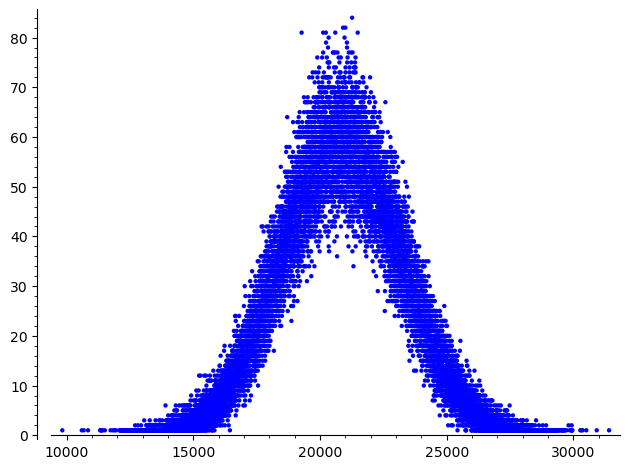}
	\caption{Simulation of the number of entries in a descending plane partitions of order $N=50$, drawn from $350,000$ samples.}
	\label{fig:clt_plot_inversion_index}
\end{figure}

\section{Proofs}
\label{ch:proofs}

\subsection{Proof of Theorem~\ref{thm:cumulant_bound}}
\label{ch:proof_general_cumulant_bound}
	
	The strategy is to apply the cumulant bound provided by \cite[Lemma~8.1]{michelen2019Preprint} and its proof. To this end, we set $u_N(z):=u_{X_N}(z)=\log\verts{f_N(z)}$ for each $X_N$ as well as
	\begin{equation*}
		S_R(\delta_N):=\big\{ z\in\C:\verts{z}\in[R^{-1},R],\arg(z)\in[-\delta_N,\delta_N]\big\}
	\end{equation*} 
	for $R > 1$.
	By assumption, $f_N$ is an analytic function on the complex plane which is zero-free in $S(\delta_N)$, and hence, $u_N$ is a harmonic function in a neighbourhood of $S_R(\delta_N/2)$ for any $R > 1$. Furthermore, $u_N$ is invariant under complex conjugation since
	\begin{equation*}
		u_N(\bar z)=\log\verts{f_N(\bar z)}=\log\verts{\overline{f_N(z)}}=\log\verts{f_N(z)}=u_N(z),
	\end{equation*}
	using that $f_N$ is a polynomial with real coefficients.
	
	Moreover, we have $u_N(|z|) \ge u_N(z)$ for any $z \in S(\delta_N)$,  since
	\begin{equation}\label{weaklypos}
		\log\verts{f_{X_N}(z)}=\log\verts{\E z^{X_N}}\le\log\E\verts{z}^{X_N}=\log\verts{f_{X_N}(\verts{z})}
	\end{equation}
    (this property is called \emph{weak positivity} in \cite{michelen2019Preprint}). In other words, on any circle centred at the origin, $u_N$ takes its maximum on the positive real axis. To apply \cite[Lemma 8.1]{michelen2019Preprint}, one needs a sharpening of this property and show that in a suitable neighbourhood of $1 \in \C$, $u_N$ decreases counter-clockwise (and due to the invariance under complex conjugation also clockwise) starting at the positive real axis on any circle centred at the origin. That is,
    \begin{equation}\label{0-decr}
    u(\rho e^{i \theta_1}) \ge u(\rho e^{i \theta_2})
    \end{equation}
    for any $\rho > 0$ and $0 \le \theta_1 \le \theta_2 \le \pi$ in some ball around $1 \in \C$ and such that $\rho e^{i\theta_1}$, $\rho e^{i \theta_2} \in S(\delta_N)$ (this property is called \emph{$0$-decreasing} in \cite{michelen2019Preprint}).
    
    We will show \eqref{0-decr} for $\rho e^{i \theta_1}, \rho e^{i \theta_2} \in S_2(\delta_N/2)$. By \cite[Lemma 4.1]{michelen2019Preprint}, since $u_N$ satisfies \eqref{weaklypos}, is invariant under complex conjugation and is analytic in a neighbourhood of $S_R(\delta_N)$ for any $R > 1$, a sufficient condition for \eqref{0-decr} to hold in $S_2(\delta_N/2)$ is given by
    \begin{equation}\label{cond}
    	\Big(\frac{2}{R}\Big)^{1/\delta_N}\max_{\substack{z\in S_R(0,\delta_N)\\\verts{z}\in\{R^{-1},R\}}}\verts{u_N(z)}\longrightarrow 0
    \end{equation}
	as $R \to \infty$. Noting that $f_N$ is a polynomial, we have $u_N(z) = \mathcal{O}(\log|z|)$ and $u_N(1/z) = \mathcal{O}(\log|z|)$ as $z \to \infty$. In particular, this implies that \eqref{cond} holds true since its left hand side is of order $\mathcal{O}(R^{-1/\delta_N} \log(R))$ as $R \to \infty$.
	
	Therefore, we may apply \cite[Lemma 8.1]{michelen2019Preprint} with the choice $\varepsilon=\delta_N/4$, noting that the condition
	$\sum_{j\ge2}\verts{\kappa_j}(j!)^{-1}(\varepsilon/32)^j>0$ therein is trivially satisfied. This yields
	\begin{equation*}
		\verts{\kappa_m^\ast}\le {m! \over (c\delta_N\sigma_N)^{m-2}}
	\end{equation*} 
	for all $m\ge3$ with $c= {1\over 4}\cdot 2^{-3246} = 2^{-3248}$. \qed

\subsection{Proof of Theorem~\ref{thm:sc_main_theorem}}
\label{ch:proof_of_sc_theorem}

\begin{lem}
	\label{lem:inequality}
	For any two real numbers $b\ge a\ge0$ and any integer $m\ge2$, it holds that
	\begin{equation}
		b^{2m}-a^{2m}\le(b^2-a^2)2^{m-1}b^{2m-2}.
		\label{eq:lem_inequality}
	\end{equation}
	\begin{proof}
		First note, that if $a=b$, \eqref{eq:lem_inequality} is trivial. In the case $b>a$, the statement can be proven by induction over $m$. The case $m=2$ is clear and we are left to show that if \eqref{eq:lem_inequality} holds for some $m\in\N$, then also
		\begin{equation*}
			b^{2m+2}-a^{2m+2}\le(b^2-a^2)2^mb^{2m}.
		\end{equation*} 
		Since $b\ge a$, we obtain
		\begin{equation*}
			\frac{b^{2m+2}-a^{2m+2}}{b^{2m}-a^{2m}}=b^2+\frac{a^{2m}b^2-a^{2m+2}}{b^{2m}-a^{2m}}\le b^2+\frac{a^2b^{2m}-a^{2m+2}}{b^{2m}-a^{2m}}\le2b^2,
		\end{equation*}
		which yields 
		\begin{align*}
			b^{2m+2}-a^{2m+2}&\le(b^{2m}-a^{2m})2b^2\le(b^2-a^2)2^mb^{2m},
		\end{align*}
		where the last step follows by induction.
	\end{proof}
\end{lem}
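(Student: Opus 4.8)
The plan is to avoid the induction altogether and argue directly via the factorization of a difference of powers. Writing $s := b^2$ and $t := a^2$, so that $s \ge t \ge 0$, the claimed inequality \eqref{eq:lem_inequality} becomes
\begin{equation*}
	s^m - t^m \le (s-t)\,2^{m-1}\,s^{m-1}.
\end{equation*}
First I would dispose of the trivial case $s = t$ (equivalently $a = b$), in which both sides vanish. For $s > t$ I would invoke the standard identity
\begin{equation*}
	s^m - t^m = (s-t)\sum_{k=0}^{m-1} s^{\,m-1-k}\, t^{\,k},
\end{equation*}
which reduces the whole problem to bounding the sum on the right-hand side by $2^{m-1}\,s^{m-1}$.

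For the second step, since $0 \le t \le s$, every summand satisfies $s^{\,m-1-k}\,t^{\,k} \le s^{\,m-1-k}\,s^{\,k} = s^{m-1}$. As there are exactly $m$ summands, the entire sum is at most $m\,s^{m-1}$. Hence it suffices to verify the purely numerical fact $m \le 2^{m-1}$, valid for every integer $m \ge 1$. Chaining these estimates gives $s^m - t^m \le (s-t)\,m\,s^{m-1} \le (s-t)\,2^{m-1}\,s^{m-1}$, and undoing the substitution $s = b^2$, $t = a^2$ recovers exactly \eqref{eq:lem_inequality} for all integers $m \ge 2$ (indeed already for $m \ge 1$).

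The only point needing a word of justification is the elementary inequality $m \le 2^{m-1}$, which follows at once from Bernoulli's inequality (or the binomial theorem): $2^{m-1} = (1+1)^{m-1} \ge 1 + (m-1) = m$. There is no genuine obstacle here; the argument is a single factorization followed by a term-by-term estimate, and the slack between $m$ and $2^{m-1}$ is precisely what produces the factor $2^{m-1}$ demanded by the downstream cumulant bound. Should one prefer to match the inductive phrasing used later in the section, the same estimate repackages as the inductive step through the ratio bound $(s^{m+1}-t^{m+1})/(s^m - t^m) \le 2s$, which carries the hypothesis from $m$ to $m+1$; I would nonetheless favour the direct factorization as the cleaner route.
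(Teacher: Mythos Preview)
Your argument is correct. The paper, by contrast, proceeds by induction on $m$: the base case $m=2$ is the identity $b^4-a^4=(b^2-a^2)(b^2+a^2)\le(b^2-a^2)2b^2$, and the inductive step is exactly the ratio bound $(b^{2m+2}-a^{2m+2})/(b^{2m}-a^{2m})\le 2b^2$ that you mention in your last paragraph. Your direct factorization is cleaner: it bypasses the induction entirely, yields the sharper intermediate estimate $s^m-t^m\le (s-t)\,m\,s^{m-1}$, and makes transparent where the factor $2^{m-1}$ comes from (namely the crude bound $m\le 2^{m-1}$). The paper's inductive route has the minor advantage of never invoking the numerical inequality $m\le 2^{m-1}$ explicitly, but this is a negligible difference; your approach is the more economical of the two.
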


\begin{proof}[Proof of Theorem~\ref{thm:sc_main_theorem}]
	For $m\ge2$, the cumulant representation \eqref{eq:sc_cumulant_representation}, the identity $\kappa_{2m,N}^\ast=\kappa_{2m,N}/\sigma_N^{2m}$ and our assumption that $b_j\ge a_j$ for all $j\in\{1,\ldots,N\}$ give
	\begin{equation*}
		\vert\kappa_{2m,N}^\ast\vert=\frac{\vert B_{2m}\vert}{2m\sigma_N^{2m}}\sum_{1\le j\le N}(b_j^{2m}-a_j^{2m}).
	\end{equation*}
	An application of Lemma~\ref{lem:inequality} then leads to
	\begin{align*}
		\vert\kappa_{2m,N}^\ast\vert&\le\frac{\vert B_{2m}\vert}{2m\sigma_N^{2m}}\sum_{1\le j\le N}2^{m-1}b_j^{2m-2}(b_j^2-a_j^2)\\
		&\le\frac{2^{m-2}}{m\sigma_N^{2m}}\vert B_{2m}\vert M_N^{2m-2}\sum_{1\le j\le N}(b_j^2-a_j^2),
	\end{align*}
	where we recall that $M_N=\max_{1\le j\le N}b_j$ as stated in the theorem. Again by \eqref{eq:sc_cumulant_representation}, the variance can be written as
	\begin{equation*}
		\sigma_N^2=\kappa_{2,N}=\frac{B_2}{2}\sum_{1\le j\le N}(b_j^2-a_j^2).
	\end{equation*}
	Noting that $B_2=1/6$, this yields
	\begin{equation*}
		\vert\kappa_{2m,N}^\ast\vert\le \frac{12\cdot2^{m-2}}{m}\vert B_{2m}\vert\Big(\frac{M_N}{\sigma_N}\Big)^{2m-2}.
	\end{equation*}
	To bound the Bernoulli numbers $B_{2m}$ we use \cite[Equation 3.1.15]{abramowitz1964}, which states that
	\begin{equation*}
		\vert B_{2m}\vert\le\frac{2(2m)!}{(2\pi)^{2m}}\frac{1}{1-2^{1-2m}}\qquad(m\ge1).
		\end{equation*}
		As a consequence,
		\begin{align*}
			\vert\kappa_{2m,N}^\ast\vert&\le \frac{12\cdot2^{m-2}}{m}\frac{2(2m)!}{(2\pi)^{2m}}\frac{1}{1-2^{1-2m}}\Big(\frac{M_N}{\sigma_N}\Big)^{2m-2}\\
			&=(2m)!\frac{3\cdot2^{1-m}}{m\pi^{2m}(1-2^{1-2m})}\Big(\frac{M_N}{\sigma_N}\Big)^{2m-2}.
		\end{align*}
		The sequence $(c_m)_{m\ge2}$ defined by $c_m:=\frac{3\cdot2^{1-m}}{m\pi^{2m}(1-2^{1-2m})}$ is strictly decreasing, since
		\begin{equation*}
			c_m-c_{m+1}=3\pi ^{-2 m}2^{m+1} \left(\frac{1}{\left(4^m-2\right) m}-\frac{1}{\pi ^2 \left(2^{2m+1}-1\right) (m+1)}\right) >0.
		\end{equation*}
		To see this, note that for $m\ge2$,
		\begin{align*}
			\pi ^2(2^{2 m+1}-1)(m+1)&=m(4^m+1)\pi^2+(2^{2m+1}-1)\pi^2>m(4^m-2),
		\end{align*}
		and therefore, 
		\begin{align*}
			\frac{1}{m(4^m-2)}>\frac{1}{(m+1)(2^{2m+1}-1)\pi^2}.
		\end{align*}
		In particular, this implies that $\max_{m\ge2}c_m=c_2=6/(7\pi^4)$. Thus,
		\begin{align*}
			\vert\kappa_{2m,N}^\ast\vert&\le(2m)!\frac{6}{7\pi^4}\Big(\frac{M_N}{\sigma_N}\Big)^{2m-2}.
		\end{align*}
		So, choosing 
		\begin{equation*}
			\Delta_N=\max_{m\ge2}\Big\{\sqrt[2m-2]{\frac{7\pi^4}{6}}\Big\}\frac{\sigma_N}{M_N}=\sqrt{\frac{7\pi^4}{6}}\frac{\sigma_N}{M_N}
		\end{equation*}
		we arrive at
		\begin{equation*}
			\vert\kappa_{2m,N}^\ast\vert\le\frac{(2m)!}{\Delta_N^{2m-2}}.
		\end{equation*}
		This completes the proof.
\end{proof}

\appendix

\section{The method of cumulants}

In order to keep this paper self-contained, we briefly summarize the probabilistic consequences which can be drawn from the Statulevi\v{c}ius condition. This approach is known as the method of cumulants and we refer to the monograph \cite{saulis1991} as well as the recent survey \cite{doring2021} for a detailed account of this method. Recall that for a sequence of random variables $(X_N)_{N\geq 1}$, we denote the cumulant of order $m$ of $X_N$ by $\kappa_{m,N}:=\kappa_m(X_N)$ and the respective cumulant of the normalization $X_N^\ast:=(X_N-\E X_N)/\sqrt{\var(X_N)}$ by $\kappa_{m,N}^\ast:=\kappa_{m}(X_N^\ast)$. We present the result in a more general form, although we  had $\gamma=0$
in all our applications.

\begin{lem}
	\label{lem:appendix_1}
	Let $(X^\ast_N)_{N\geq 1}$ be a sequence of random variables which satisfies
	\begin{equation}\label{Statallgem}
		\verts{\kappa_{m,N}^\ast} \le \frac{(m!)^{1+\gamma}}{\Delta_N^{m-2}}
	\end{equation}
	for all $m\ge3$, all $N\ge1$, some $\gamma\ge0$ and $\Delta_N>0$. Assume that $\Delta_{N}\to\infty$ as $N\to\infty$.
	\begin{itemize}
		\item[(i)] For any $N\ge1$ it holds that
		\begin{equation*}
			\sup_{x\in\R}\verts{\pr(X^\ast_N\ge x)-\pr(Z\ge x)} \le \frac{C_\gamma}{\Delta_N^{1/(1+2\gamma)}},
		\end{equation*}
		where $C_\gamma\in(0,\infty)$ only depends on $\gamma$.
		
		\item[(ii)] For every sequence $(a_N)_{N\geq 1}$ with $a_N\to\infty$ and $a_N=o(\Delta_N^{1/(1+2\gamma)})$, the sequence of random variables $X_N^\ast/a_N$ satisfies a moderate deviation principle with speed $a_N^2$ and rate function $I(x)=x^2/2$.  
		
		\item[(iii)] Assume that $|\kappa_{m,N}^\ast| \le (m!/2)^{1+\gamma} H/\bar{\Delta}_N^{m-2}$ for all $m\ge 3$, $N \ge 1$, some $\gamma \ge 0$ and $H,\bar{\Delta}_N > 0$. Then, there exists a constant $C>0$ such that for every $N\ge1$ and all $x\ge0$,
		\begin{equation*}
			\pr(X_N^\ast\ge x)\le C\exp\Big( -\frac{1}{2}\frac{x^2}{H+x^{2-\alpha}/\bar{\Delta}_N^{\alpha}} \Big),
		\end{equation*}
		where $\alpha:=1/(1+\gamma)$. 
	\end{itemize}
\end{lem}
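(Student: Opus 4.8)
The plan is to deduce all three assertions from the general machinery of the method of cumulants developed in \cite{saulis1991} (and recalled in \cite{doring2021}), since \eqref{Statallgem} is precisely the Statulevi\v{c}ius condition for the sequence $(X_N^\ast)_{N\ge1}$; no new probabilistic input is required. The only preliminary reduction is that all events and cumulants appearing in the statement transform in the obvious way under the affine normalization $X_N \mapsto X_N^\ast$, so it suffices to work with the standardized variables, which have mean $0$, variance $1$, and cumulants $\kappa_{m,N}^\ast$ obeying \eqref{Statallgem}.

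For \emph{(i)}, I would invoke the Berry-Esseen estimate for sequences satisfying the Statulevi\v{c}ius condition, as given in \cite{saulis1991} (see also \cite{doring2021}): under \eqref{Statallgem} one has $\sup_{x\in\R}|\pr(X_N^\ast \le x) - \pr(Z\le x)| \le C_\gamma \Delta_N^{-1/(1+2\gamma)}$ with $C_\gamma$ depending only on $\gamma$, and, since the standard Gaussian law is symmetric and atomless, the same bound holds for the upper tails; the hypothesis $\Delta_N \to \infty$ then also yields the central limit theorem as a by-product. For \emph{(ii)}, the moderate deviation principle follows directly from \eqref{Statallgem} by the moderate deviation results of \cite{saulis1991} (equivalently \cite[Theorem~3.1]{doring2021}): for any $a_N \to \infty$ with $a_N = o(\Delta_N^{1/(1+2\gamma)})$, the sequence $(X_N^\ast/a_N)_{N\ge1}$ obeys the large-deviation upper and lower bounds at speed $a_N^2$ with rate function $I(x)=x^2/2$, and passing from $X_N^\ast$ to $X_N$ is an affine change of variable that leaves the MDP unchanged. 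The underlying mechanism is the standard quadratic approximation $\log\E e^{t X_N^\ast} = t^2/2 + O(|t|^3/\Delta_N)$ valid for $|t| \lesssim \Delta_N$, which after the substitution $t \mapsto a_N t$ produces the Legendre transform $x^2/2$ in the stated regime.

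For \emph{(iii)}, I would apply the Bernstein-type inequality attached to the Statulevi\v{c}ius condition, as given in \cite{saulis1991} (or the corresponding statement in \cite{doring2021}): from $|\kappa_{m,N}^\ast| \le (m!/2)^{1+\gamma} H\,\bar\Delta_N^{-(m-2)}$ for all $m \ge 3$ one obtains, for an absolute constant $C>0$, $\pr(X_N^\ast \ge x) \le C\exp(-\tfrac12 x^2/(H + x^{2-\alpha}/\bar\Delta_N^{\alpha}))$ with $\alpha = 1/(1+\gamma)$, and passing back to $X_N$ is again just a centering and a scaling.

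I expect the only genuine obstacle to be bookkeeping rather than mathematics: the references state these results under several mutually equivalent normalizations of the cumulant hypothesis (the factor $m!$ versus $m!/2$, the presence or absence of the auxiliary parameter $H$, and the exponent $1/(1+2\gamma)$ in parts (i)--(ii) versus $1/(1+\gamma)$ in part (iii)), so one must carefully translate between them to obtain the clean statements above --- and, in particular, to justify the explicit numerical constants quoted in the main text after Theorems~\ref{thm:cumulant_bound} and \ref{thm:sc_main_theorem}.
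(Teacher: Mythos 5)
Your proposal is correct and follows essentially the same route as the paper: the lemma is established by invoking the standard consequences of the Statulevi\v{c}ius condition, namely Corollary 2.1 and Corollary 2.3 of \cite{saulis1991} (equivalently Theorems 2.4 and 2.5 of \cite{doring2021}) for parts \textit{(i)} and \textit{(iii)}, and Theorem 1.1 of \cite{doring2013} (Theorem 3.1 of \cite{doring2021}) for part \textit{(ii)}, with only the bookkeeping of normalizations and constants to check. Your additional remarks on standardization and on the quadratic approximation of the log-moment generating function are consistent with, though not needed beyond, this citation-based argument.
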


Statement \textit{(i)} of the above lemma corresponds to Theorem 2.4 in \cite{doring2021} and to Corollary 2.1 in \cite{saulis1991}. The latter gives the precise value
\[
	C_\gamma=2^{1/(4 \gamma+2)+2} \cdot 3^{1/(2 \gamma+1)+3}
	\]
for the constant. Statement \textit{(ii)} agrees with Theorem 3.1 in \cite{doring2021} and with Theorem 1.1 in \cite{doring2013}. Finally, \textit{(iii)} corresponds to Theorem 2.5 in \cite{doring2021} and to Corollary 2.3 in~\cite{saulis1991}. In our applications, we have set $H=2$, in which case (together with $\gamma=0$) the assumption on the cumulants agrees with \eqref{Statallgem}.

If $\gamma = 0$, \eqref{Statallgem} also gives rise to mod-Gaussian convergence. We state a corresponding result in two versions, one of them directly taken from the literature and the other one adapted to the situation we consider in the present paper.

\begin{lem}
	Let $(X_N)_{N\geq 1}$ be a sequence of random variables.
	\begin{itemize}
		\item[(i)] Assume that $|\kappa_{m,N}| \le (Cm)^m \alpha_N\beta_N^m$ for all $m \ge 2$ and for some sequences $(\alpha_N)_{N\geq 1} \to \infty$ and $(\beta_N)_{N\geq 1}$ satisfying $\alpha_N\to\infty$. Further assume that there exists an integer $v \ge 3$ such that $\kappa_{m,N} = 0$ for any $m = 3,\ldots,v-1$ and that
		\begin{align*}
			\kappa_{2,N} &= \sigma^2 \alpha_N \beta_N^2(1 + o(\alpha_N^{-(v-2)/v})),\\
			\kappa_{v,N} &= L\alpha_N \beta_N^v (1+o(1)).
		\end{align*}
		Then, the sequence of random variables $(X_N-\mathbb{E}[X_N])/(\alpha_N^{1/v}\beta_N)$ converges in the mod-Gaussian sense with parameters $t_N = \sigma^2 \alpha_N^{(v-2)/v}$ and limiting function $\psi(z) = e^{Lz^v/v!}$.
		\item[(ii)] Assume that \eqref{Statallgem} holds. Further assume that there exists an integer $v \ge 3$ such that $\kappa_{m,N}^\ast = 0$ for any $m=3, \ldots, v-1$ and that $\kappa_{v,N}^\ast \Delta_N^{v-2}\to L \in \mathbb{R}$ as $N \to \infty$. Then, the sequence of random variables $\Delta_N^{1-2/v} X_N^\ast$ converges in the mod-Gaussian sense with parameters $\Delta_N^{2(v-2)/v}$ and limiting function $e^{Lz^v/v!}$.
	\end{itemize}
\end{lem}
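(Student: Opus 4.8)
The plan is this. Part \textit{(i)} is nothing but a restatement of the ``method of cumulants'' criterion for mod-Gaussian convergence established in \cite[Chapter~5]{feray2016} (see also \cite{MeliotNik}); its proof therefore consists only of matching the notation and hypotheses above with those of the cited result, and no independent argument is needed.

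For part \textit{(ii)} I would argue directly via cumulant generating functions. Put $Y_N:=\Delta_N^{1-2/v}X_N^\ast$ and $t_N:=\Delta_N^{2(v-2)/v}=\Delta_N^{2-4/v}$. Since $X_N^\ast$ is standardized we have $\kappa_{1,N}^\ast=0$ and $\kappa_{2,N}^\ast=1$, and under \eqref{Statallgem} with $\gamma=0$ the cumulant series $\Lambda_N(w):=\sum_{m\ge2}\kappa_{m,N}^\ast w^m/m!$ converges on the disk $\{|w|<\Delta_N\}$ and represents $\log\E[e^{wX_N^\ast}]$ there, i.e.\ $\E[e^{wX_N^\ast}]=e^{\Lambda_N(w)}$ on that disk (this is automatic when $X_N$ is bounded, as in every example in the paper). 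Substituting $w=\Delta_N^{1-2/v}z$ --- legitimate for $|z|<\Delta_N^{2/v}$, a radius tending to infinity since $1-2/v>0$ --- and observing that the $m=2$ term of $\Lambda_N$ equals $\tfrac12\kappa_{2,N}^\ast\Delta_N^{2(1-2/v)}z^2=\tfrac12 t_N z^2$, one obtains
\[
	\E[e^{zY_N}]\,e^{-t_N z^2/2}=\exp\!\Big(\sum_{m\ge3}\frac{\kappa_{m,N}^\ast}{m!}\,\Delta_N^{(1-2/v)m}\,z^m\Big).
\]
It thus remains to show that the exponent tends to $\tfrac{L}{v!}z^v$ locally uniformly on $\C$.

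For this I would split the sum into three pieces. The terms with $3\le m\le v-1$ vanish by assumption. The term $m=v$ equals $\tfrac{1}{v!}\big(\kappa_{v,N}^\ast\Delta_N^{v-2}\big)z^v$ and tends to $\tfrac{L}{v!}z^v$ uniformly on $\{|z|\le R\}$ by the hypothesis $\kappa_{v,N}^\ast\Delta_N^{v-2}\to L$. For the tail $m>v$, \eqref{Statallgem} gives $|\kappa_{m,N}^\ast|\,\Delta_N^{(1-2/v)m}/m!\le\Delta_N^{2-2m/v}$, so for $|z|\le R$ and $N$ large enough that $R\Delta_N^{-2/v}\le\tfrac12$,
\[
	\sum_{m>v}\frac{|\kappa_{m,N}^\ast|}{m!}\,\Delta_N^{(1-2/v)m}\,|z|^m\le\Delta_N^2\sum_{m>v}\big(R\Delta_N^{-2/v}\big)^m\le 2\Delta_N^2\big(R\Delta_N^{-2/v}\big)^{v+1}=2R^{v+1}\Delta_N^{-2/v}\longrightarrow 0.
\]
Hence the exponent converges to $\tfrac{L}{v!}z^v$ locally uniformly, and by continuity of $\exp$ on compact sets $\E[e^{zY_N}]e^{-t_N z^2/2}\to e^{Lz^v/v!}$ locally uniformly, which is the assertion. (Alternatively, when $L\neq 0$, part \textit{(ii)} follows from part \textit{(i)} applied to $(X_N^\ast)_N$ with $\alpha_N=\Delta_N^2$, $\beta_N=\Delta_N^{-1}$, $\sigma^2=1$, $C=1$: the bound $|\kappa_m(X_N^\ast)|\le(Cm)^m\alpha_N\beta_N^m$ is \eqref{Statallgem} together with $m!\le m^m$, the error terms are trivially zero since $\kappa_{2,N}^\ast=1=\sigma^2\alpha_N\beta_N^2$, and one checks $\alpha_N^{1/v}\beta_N=\Delta_N^{2/v-1}$ and $\sigma^2\alpha_N^{(v-2)/v}=\Delta_N^{2(v-2)/v}$.)

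The only step that is not routine bookkeeping is the analyticity assertion $\E[e^{wX_N^\ast}]=e^{\Lambda_N(w)}$ on a disk of radius of order $\Delta_N$: one needs that a Statulevi\v{c}ius bound on the cumulants of \emph{all} orders genuinely controls the Laplace transform (equivalently the moments), not merely the formal cumulant series. For bounded random variables this is immediate, and in general it is the standard passage recorded in \cite{saulis1991} (see also \cite{doring2021}). Everything else reduces to summing a geometric series and to the continuity of $\exp$ on compact subsets of $\C$.
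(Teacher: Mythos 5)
Your proposal is correct. For part \textit{(i)} you do exactly what the paper does, namely observe that it is a restatement of the cumulant method of \cite[Section 5.1]{feray2016}. For part \textit{(ii)} your main argument takes a genuinely different route: the paper's entire proof is the reduction you relegate to parentheses --- it applies part \textit{(i)} with $C=1$, $\alpha_N=\Delta_N^2$, $\beta_N=\sigma_N/\Delta_N$, $\sigma^2=1$ (to $X_N$ rather than to $X_N^\ast$, which after unwinding $(X_N-\E X_N)/(\alpha_N^{1/v}\beta_N)=\Delta_N^{1-2/v}X_N^\ast$ is the same substitution as your $\alpha_N=\Delta_N^2$, $\beta_N=\Delta_N^{-1}$ applied to $X_N^\ast$). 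Your direct computation with the cumulant generating function instead re-derives the relevant estimate of \cite{feray2016} in this special case: the splitting at $m=v$, the identification of the $m=2$ term with $t_Nz^2/2$, and the geometric tail bound $\Delta_N^2\sum_{m>v}(R\Delta_N^{-2/v})^m\to 0$ are all correct (using $\Delta_N\to\infty$, which is the intended standing assumption). What this buys is self-containedness and a clean treatment of the case $L=0$, where the literal hypothesis ``$\kappa_{v,N}=L\alpha_N\beta_N^v(1+o(1))$'' of part \textit{(i)} is awkward --- a point you rightly flag for the reduction and which the paper's one-line proof glosses over. What it costs is the analyticity step $\E[e^{wX_N^\ast}]=e^{\Lambda_N(w)}$ on a disc of radius of order $\Delta_N$, which you correctly identify as the only non-routine ingredient; your identity-theorem argument is immediate for the bounded $\N_0$-valued variables to which the paper applies the lemma, and in the general setting this is the standard passage from \eqref{Statallgem} to control of the Laplace transform recorded in \cite{saulis1991}. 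So: both routes are valid; yours is more elementary and explicit, the paper's is shorter because it delegates all analysis to \cite{feray2016}.
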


Part \textit{(i)} of Lemma \ref{lem:appendix_1} is a summary of the results of Section 5.1 in \cite{feray2016}, and \textit{(ii)} immediately follows from it as we may choose $C=1$, $\alpha_N=\Delta_{N}^2$, $\beta_N=\sigma_N/\Delta_{N}$ and $\sigma^2=1$.

\subsection*{Acknowledgment}
We are grateful to Martina Juhnke-Kubitzke, Kathrin Meier, Benedikt Redno\ss\  and Christian Stump for stimulating discussions about the content of this paper. We would moreover like to thank an anonymous reviewer of the first version of this paper who pointed us to the cumulant bound \eqref{cumulantbound} given in \cite{michelen2019Preprint}, largely generalizing our initial results for the class of polynomials discussed in Section \ref{ch:special case}. \\ The last author was supported by the German Research Foundation (DFG) via the priority program SPP 2265 and the collaborative research centre CRC/TRR 191.

\printbibliography

@article {Harper,
	AUTHOR = {Harper, Lawrence H.},
	TITLE = {Stirling behavior is asymptotically normal},
	JOURNAL = {Ann. Math. Statist.},
	FJOURNAL = {Annals of Mathematical Statistics},
	VOLUME = {38},
	YEAR = {1967},
	PAGES = {410--414},
	ISSN = {0003-4851},
	MRCLASS = {60.10},
	MRNUMBER = {211432},
	MRREVIEWER = {R. Fischler},
	DOI = {10.1214/aoms/1177698956},
	URL = {https://doi.org/10.1214/aoms/1177698956},
}

@book {StanleyBook,
	AUTHOR = {Stanley, Richard P.},
	TITLE = {Enumerative Combinatorics. {V}ol. 2},
	SERIES = {Cambridge Studies in Advanced Mathematics},
	VOLUME = {62},
	NOTE = {With a foreword by Gian-Carlo Rota and appendix 1 by Sergey
	Fomin},
	PUBLISHER = {Cambridge University Press, Cambridge},
	YEAR = {1999},
	PAGES = {xii+581},
	ISBN = {0-521-56069-1; 0-521-78987-7},
	MRCLASS = {05A15 (05-02 05E05 05E10 68R05)},
	MRNUMBER = {1676282},
	MRREVIEWER = {Ira Gessel},
	DOI = {10.1017/CBO9780511609589},
	URL = {https://doi.org/10.1017/CBO9780511609589},
}

@incollection {MeliotNik,
	AUTHOR = {M\'{e}liot, Pierre-Lo\"{\i}c and Nikeghbali, Ashkan},
	TITLE = {Mod-{G}aussian convergence and its applications for models of
	statistical mechanics},
	BOOKTITLE = {In memoriam {M}arc {Y}or---{S}\'{e}minaire de {P}robabilit\'{e}s
	{XLVII}},
	SERIES = {Lecture Notes in Math.},
	VOLUME = {2137},
	PAGES = {369--425},
	PUBLISHER = {Springer, Cham},
	YEAR = {2015},
	MRCLASS = {60F05 (60K35 82B05)},
	MRNUMBER = {3444307},
	MRREVIEWER = {Antal A. J\'{a}rai},
	DOI = {10.1007/978-3-319-18585-9\_17},
	URL = {https://doi.org/10.1007/978-3-319-18585-9_17},
}

@article {hwang_zacharovas2015,
	AUTHOR = {Hwang, Hsien-Kuei and Zacharovas, Vytas},
	TITLE = {Limit distribution of the coefficients of polynomials with
	only unit roots},
	JOURNAL = {Random Structures Algorithms},
	FJOURNAL = {Random Structures \& Algorithms},
	VOLUME = {46},
	YEAR = {2015},
	NUMBER = {4},
	PAGES = {707--738},
	ISSN = {1042-9832},
	MRCLASS = {60F05 (05A15)},
	MRNUMBER = {3346464},
	MRREVIEWER = {Aklilu Zeleke},
	DOI = {10.1002/rsa.20516},
	URL = {https://doi.org/10.1002/rsa.20516},
}

@article {doring2021,
AUTHOR = {D\"{o}ring, Hanna and Jansen, Sabine and Schubert, Kristina},
TITLE = {The method of cumulants for the normal approximation},
JOURNAL = {Probab. Surv.},
FJOURNAL = {Probability Surveys},
VOLUME = {19},
YEAR = {2022},
PAGES = {185--270},
MRCLASS = {60F05 (60F10 60G70)},
MRNUMBER = {4408127},
DOI = {10.1214/22-ps7},
URL = {https://doi.org/10.1214/22-ps7},
}

@book {feray2016,
	AUTHOR = {F\'{e}ray, Valentin and M\'{e}liot, Pierre-Lo\"{\i}c and Nikeghbali,
	Ashkan},
	TITLE = {Mod-$\phi$ Convergence},
	SERIES = {SpringerBriefs in Probability and Mathematical Statistics},
	NOTE = {Normality zones and precise deviations},
	PUBLISHER = {Springer, Cham},
	YEAR = {2016},
	PAGES = {xii+152},
	ISBN = {978-3-319-46821-1; 978-3-319-46822-8},
	MRCLASS = {60F05 (05A17 05A18 05C80 60C05 60E10 60F10)},
	MRNUMBER = {3585777},
	MRREVIEWER = {Zakhar Kabluchko},
	DOI = {10.1007/978-3-319-46822-8},
	URL = {https://doi.org/10.1007/978-3-319-46822-8},
}

@article {kahle2020,
	AUTHOR = {Kahle, Thomas and Stump, Christian},
	TITLE = {Counting inversions and descents of random elements in finite
	{C}oxeter groups},
	JOURNAL = {Math. Comp.},
	FJOURNAL = {Mathematics of Computation},
	VOLUME = {89},
	YEAR = {2020},
	NUMBER = {321},
	PAGES = {437--464},
	ISSN = {0025-5718},
	MRCLASS = {20F55 (05A15 05A16 60F05)},
	MRNUMBER = {4011551},
	MRREVIEWER = {Neil J. Y. Fan},
	DOI = {10.1090/mcom/3443},
	URL = {https://doi.org/10.1090/mcom/3443},
}

@book {saulis1991,
	AUTHOR = {Saulis, Leonas and Statulevi\v{c}ius, Vytautas A.},
	TITLE = {Limit Theorems for Large Deviations},
	SERIES = {Mathematics and its Applications (Soviet Series)},
	VOLUME = {73},
	NOTE = {Translated and revised from the 1989 Russian original},
	PUBLISHER = {Kluwer Academic Publishers Group, Dordrecht},
	YEAR = {1991},
	PAGES = {viii+232},
	ISBN = {0-7923-1475-1},
	MRCLASS = {60F10},
	MRNUMBER = {1171883},
	MRREVIEWER = {No\"{e}l Veraverbeke},
	DOI = {10.1007/978-94-011-3530-6},
	URL = {https://doi.org/10.1007/978-94-011-3530-6},
}

@book {andrews1998,
	AUTHOR = {Andrews, George E.},
	TITLE = {The Theory of Partitions},
	SERIES = {Cambridge Mathematical Library},
	NOTE = {Reprint of the 1976 original},
	PUBLISHER = {Cambridge University Press, Cambridge},
	YEAR = {1998},
	PAGES = {xvi+255},
	ISBN = {0-521-63766-X},
	MRCLASS = {11P81 (05A17 11P82 11P83)},
	MRNUMBER = {1634067},
}

@article {doring2013,
	AUTHOR = {D\"{o}ring, Hanna and Eichelsbacher, Peter},
	TITLE = {Moderate deviations via cumulants},
	JOURNAL = {J. Theoret. Probab.},
	FJOURNAL = {Journal of Theoretical Probability},
	VOLUME = {26},
	YEAR = {2013},
	NUMBER = {2},
	PAGES = {360--385},
	ISSN = {0894-9840},
	MRCLASS = {60F10 (05Cxx 60B20 62G20)},
	MRNUMBER = {3055808},
	DOI = {10.1007/s10959-012-0437-0},
	URL = {https://doi.org/10.1007/s10959-012-0437-0},
}

@book {abramowitz1964,
	AUTHOR = {Abramowitz, Milton and Stegun, Irene A.},
	TITLE = {Handbook of Mathematical Functions with Formulas, Graphs, and
	Mathematical Tables},
	SERIES = {National Bureau of Standards Applied Mathematics Series, No.
	55},
	NOTE = {For sale by the Superintendent of Documents},
	PUBLISHER = {U. S. Government Printing Office, Washington, D.C.},
	YEAR = {1964},
	PAGES = {xiv+1046},
	MRCLASS = {33.00 (65.05)},
	MRNUMBER = {0167642},
	MRREVIEWER = {D. H. Lehmer},
}

@book {bjorner2006,
	AUTHOR = {Bj\"{o}rner, Anders and Brenti, Francesco},
	TITLE = {Combinatorics of {C}oxeter Groups},
	SERIES = {Graduate Texts in Mathematics},
	VOLUME = {231},
	PUBLISHER = {Springer, New York},
	YEAR = {2005},
	PAGES = {xiv+363},
	ISBN = {978-3540-442387; 3-540-44238-3},
	MRCLASS = {05-01 (05E15 20F55)},
	MRNUMBER = {2133266},
	MRREVIEWER = {Jian-yi Shi},
}

@article {chen2008,
	AUTHOR = {Chen, William Y. C. and Wang, Carol J. and Wang, Larry X. W.},
	TITLE = {The limiting distribution of the coefficients of the
	{$q$}-{C}atalan numbers},
	JOURNAL = {Proc. Amer. Math. Soc.},
	FJOURNAL = {Proceedings of the American Mathematical Society},
	VOLUME = {136},
	YEAR = {2008},
	NUMBER = {11},
	PAGES = {3759--3767},
	ISSN = {0002-9939},
	MRCLASS = {05A16 (05A30 60C05 60F05)},
	MRNUMBER = {2425713},
	MRREVIEWER = {Markus Kuba},
	DOI = {10.1090/S0002-9939-08-09464-1},
	URL = {https://doi.org/10.1090/S0002-9939-08-09464-1},
}

@article{meier2022,
	title={Central limit theorems for generalized descents and generalized inversions in finite root systems},
	author={Meier, Kathrin and Stump, Christian},
	journal={arXiv preprint arXiv:2202.05580},
	year={2022}
}

@article {hwang1998,
	AUTHOR = {Hwang, Hsien-Kuei},
	TITLE = {On convergence rates in the central limit theorems for
	combinatorial structures},
	JOURNAL = {European J. Combin.},
	FJOURNAL = {European Journal of Combinatorics},
	VOLUME = {19},
	YEAR = {1998},
	NUMBER = {3},
	PAGES = {329--343},
	ISSN = {0195-6698},
	MRCLASS = {60C05 (05A99 60F05)},
	MRNUMBER = {1621021},
	DOI = {10.1006/eujc.1997.0179},
	URL = {https://doi.org/10.1006/eujc.1997.0179},
}

@ARTICLE{michelen2019Preprint,
	author = {{Michelen}, Marcus and {Sahasrabudhe}, Julian},
	title = "{Central limit theorems and the geometry of polynomials}",
	journal = {arXiv e-prints},
	keywords = {Mathematics - Probability, Mathematics - Classical Analysis and ODEs, Mathematics - Combinatorics},
	year = 2019,
	month = aug,
	eid = {arXiv:1908.09020},
	pages = {arXiv:1908.09020},
	archivePrefix = {arXiv},
	eprint = {1908.09020},
	primaryClass = {math.PR},
	adsurl = {https://ui.adsabs.harvard.edu/abs/2019arXiv190809020M},
	adsnote = {Provided by the SAO/NASA Astrophysics Data System}
}

@article {michelen2019,
	AUTHOR = {Michelen, Marcus and Sahasrabudhe, Julian},
	TITLE = {Central limit theorems from the roots of probability
	generating functions},
	JOURNAL = {Adv. Math.},
	FJOURNAL = {Advances in Mathematics},
	VOLUME = {358},
	YEAR = {2019},
	PAGES = {106840, 27},
	ISSN = {0001-8708},
	MRCLASS = {60F05 (26C10 30C10 60E10)},
	MRNUMBER = {4021875},
	MRREVIEWER = {Carmen Sang\"{u}esa},
	DOI = {10.1016/j.aim.2019.106840},
	URL = {https://doi.org/10.1016/j.aim.2019.106840},
}

@article {billey_konvalinka_swanson2020,
	AUTHOR = {Billey, Sara C. and Konvalinka, Matja\v{z} and Swanson, Joshua P.},
	TITLE = {Asymptotic normality of the major index on standard tableaux},
	JOURNAL = {Adv. in Appl. Math.},
	FJOURNAL = {Advances in Applied Mathematics},
	VOLUME = {113},
	YEAR = {2020},
	PAGES = {101972, 36},
	ISSN = {0196-8858},
	MRCLASS = {05A16 (60C05 60F05)},
	MRNUMBER = {4033647},
	MRREVIEWER = {B. L. Granovsky},
	DOI = {10.1016/j.aam.2019.101972},
	URL = {https://doi.org/10.1016/j.aam.2019.101972},
}

@article {hwang_chern_duh2020,
	AUTHOR = {Hwang, Hsien-Kuei and Chern, Hua-Huai and Duh, Guan-Huei},
	TITLE = {An asymptotic distribution theory for {E}ulerian recurrences
	with applications},
	JOURNAL = {Adv. in Appl. Math.},
	FJOURNAL = {Advances in Applied Mathematics},
	VOLUME = {112},
	YEAR = {2020},
	PAGES = {101960, 125},
	ISSN = {0196-8858},
	MRCLASS = {05A15 (05A05 05A10 05E15 11B83 30C10 30E15 60F05)},
	MRNUMBER = {4023911},
	MRREVIEWER = {Ren Ding},
	DOI = {10.1016/j.aam.2019.101960},
	URL = {https://doi.org/10.1016/j.aam.2019.101960},
}

@article{mann1947,
	title={On a test of whether one of two random variables is stochastically larger than the other},
	author={Mann, Henry B. and Whitney, Donald R.},
	journal={The Annals of Mathematical Statistics},
	pages={50--60},
	year={1947},
	publisher={JSTOR}
}

@article{takacs1986,
	title={Some asymptotic formulas for lattice paths},
	author={Tak{\'a}cs, Lajos},
	journal={Journal of Statistical Planning and Inference},
	volume={14},
	number={1},
	pages={123--142},
	year={1986},
	publisher={Elsevier}
}

@article{wu2014,
	title={Some limit properties for the coefficients of the q-Catalan numbers},
	author={Wu, Zikai and Du, Hongxia},
	journal={Indian Journal of Pure and Applied Mathematics},
	volume={45},
	number={4},
	pages={469--478},
	year={2014},
	publisher={Springer}
}

@article {pitman1997,
    AUTHOR = {Pitman, Jim},
     TITLE = {Probabilistic bounds on the coefficients of polynomials with
              only real zeros},
   JOURNAL = {J. Combin. Theory Ser. A},
  FJOURNAL = {Journal of Combinatorial Theory. Series A},
    VOLUME = {77},
      YEAR = {1997},
    NUMBER = {2},
     PAGES = {279--303},
      ISSN = {0097-3165},
   MRCLASS = {05A16 (05A20 60C05 60G99)},
  MRNUMBER = {1429082},
MRREVIEWER = {Edward A. Bender},
       DOI = {10.1006/jcta.1997.2747},
       URL = {https://doi.org/10.1006/jcta.1997.2747},
}

@incollection {haiman1998,
    AUTHOR = {Haiman, Mark},
     TITLE = {{$t,q$}-{C}atalan numbers and the {H}ilbert scheme},
      NOTE = {Selected papers in honor of Adriano Garsia (Taormina, 1994)},
   JOURNAL = {Discrete Math.},
  FJOURNAL = {Discrete Mathematics},
    VOLUME = {193},
      YEAR = {1998},
    NUMBER = {1-3},
     PAGES = {201--224},
      ISSN = {0012-365X},
   MRCLASS = {05E10 (05E05 13D40 14C05)},
  MRNUMBER = {1661369},
MRREVIEWER = {A. A. Iarrobino, Jr.},
       DOI = {10.1016/S0012-365X(98)00141-1},
       URL = {https://doi.org/10.1016/S0012-365X(98)00141-1},
}

@article {striker2011,
	AUTHOR = {Striker, Jessica},
	TITLE = {A direct bijection between descending plane partitions with no
	special parts and permutation matrices},
	JOURNAL = {Discrete Math.},
	FJOURNAL = {Discrete Mathematics},
	VOLUME = {311},
	YEAR = {2011},
	NUMBER = {21},
	PAGES = {2581--2585},
	ISSN = {0012-365X},
	MRCLASS = {05A05 (05A15 05A17 05A19)},
	MRNUMBER = {2832158},
	MRREVIEWER = {Markus E. Fulmek},
	DOI = {10.1016/j.disc.2011.07.030},
	URL = {https://doi.org/10.1016/j.disc.2011.07.030},
}

@article {mills1983,
	AUTHOR = {Mills, William H. and Robbins, David P. and Rumsey, Jr., Howard},
	TITLE = {Alternating sign matrices and descending plane partitions},
	JOURNAL = {J. Combin. Theory Ser. A},
	FJOURNAL = {Journal of Combinatorial Theory. Series A},
	VOLUME = {34},
	YEAR = {1983},
	NUMBER = {3},
	PAGES = {340--359},
	ISSN = {0097-3165},
	MRCLASS = {05A15 (05B20)},
	MRNUMBER = {700040},
	MRREVIEWER = {S. Milne},
	DOI = {10.1016/0097-3165(83)90068-7},
	URL = {https://doi.org/10.1016/0097-3165(83)90068-7},
}

@article {andrews1979,
	AUTHOR = {Andrews, George E.},
	TITLE = {Plane partitions. {III}. {T}he weak {M}acdonald conjecture},
	JOURNAL = {Invent. Math.},
	FJOURNAL = {Inventiones Mathematicae},
	VOLUME = {53},
	YEAR = {1979},
	NUMBER = {3},
	PAGES = {193--225},
	ISSN = {0020-9910},
	MRCLASS = {05A17 (10A45)},
	MRNUMBER = {549398},
	MRREVIEWER = {Edward A. Bender},
	DOI = {10.1007/BF01389763},
	URL = {https://doi.org/10.1007/BF01389763},
}

@article {ayyer2010,
	AUTHOR = {Ayyer, Arvind},
	TITLE = {A natural bijection between permutations and a family of
	descending plane partitions},
	JOURNAL = {European J. Combin.},
	FJOURNAL = {European Journal of Combinatorics},
	VOLUME = {31},
	YEAR = {2010},
	NUMBER = {7},
	PAGES = {1785--1791},
	ISSN = {0195-6698},
	MRCLASS = {05A19 (05A05 05A15)},
	MRNUMBER = {2673018},
	MRREVIEWER = {Ashok Kumar Agarwal},
	DOI = {10.1016/j.ejc.2010.02.003},
	URL = {https://doi.org/10.1016/j.ejc.2010.02.003},
}

@article {mills1982,
	AUTHOR = {Mills, William H. and Robbins, David P. and Rumsey, Jr., Howard},
	TITLE = {Proof of the {M}acdonald conjecture},
	JOURNAL = {Invent. Math.},
	FJOURNAL = {Inventiones Mathematicae},
	VOLUME = {66},
	YEAR = {1982},
	NUMBER = {1},
	PAGES = {73--87},
	ISSN = {0020-9910},
	MRCLASS = {05A17 (05A15)},
	MRNUMBER = {652647},
	MRREVIEWER = {S. Milne},
	DOI = {10.1007/BF01404757},
	URL = {https://doi.org/10.1007/BF01404757},
}

@misc{findstat,
	author        = {Martin Rubey and Christian Stump and others},
	title         = {{FindStat} - {T}he combinatorial statistics database},
	howpublished  = {\url{http://www.FindStat.org}},
	url           = {http://www.FindStat.org},
	note          = {Accessed: \today},
}

@article {belovas2020,
	AUTHOR = {Belovas, Igoris and Sabaliauskas, Martynas},
	TITLE = {Series with binomial-like coefficients for evaluation and 3{D}
	visualization of zeta functions},
	JOURNAL = {Informatica (Vilnius)},
	FJOURNAL = {Informatica (Vilnius)},
	VOLUME = {31},
	YEAR = {2020},
	NUMBER = {4},
	PAGES = {659--680},
	ISSN = {0868-4952},
	MRCLASS = {60F05 (11M35 11Y35 33F05 65D20)},
	MRNUMBER = {4189915},
	DOI = {10.15388/20-infor434},
	URL = {https://doi.org/10.15388/20-infor434},
}

@article{ma2015,
	title={Enumeration of permutations by number of alternating descents},
	author={Ma, Shi-Mei and Yeh, Yeong-Nan},
	JOURNAL={Discrete Math.},
	FJOURNAL = {Discrete Mathematics},
	VOLUME = {339},
	YEAR={2016},
	NUMBER = {4},
	PAGES = {1362--1367},
}

@article {chebikin2008,
	AUTHOR = {Chebikin, Denis},
	TITLE = {Variations on descents and inversions in permutations},
	JOURNAL = {Electron. J. Combin.},
	FJOURNAL = {Electronic Journal of Combinatorics},
	VOLUME = {15},
	YEAR = {2008},
	NUMBER = {1},
	PAGES = {Research Paper 132, 34},
	MRCLASS = {05A05 (05A10 05A15)},
	MRNUMBER = {2448882},
	MRREVIEWER = {Toufik Mansour},
	URL =
	{http://www.combinatorics.org/Volume_15/Abstracts/v15i1r132.html},
}

@article {janson2013,
	AUTHOR = {Janson, Svante},
	TITLE = {Euler-{F}robenius numbers and rounding},
	JOURNAL = {Online J. Anal. Comb.},
	FJOURNAL = {Online Journal of Analytic Combinatorics},
	NUMBER = {8},
	YEAR = {2013},
	PAGES = {34},
	MRCLASS = {60C05 (11B68 60E05 60F05)},
	MRNUMBER = {3121605},
	MRREVIEWER = {Mark Bollman},
}

@article{higashitani_yamada2021,
	title={The distribution of roots of Ehrhart polynomials for the dual of root polytopes},
	author={Higashitani, Akihiro and Yamada, Yumi},
	journal={arXiv e-prints},
	pages={arXiv--2105},
	year={2021}
}

@article {higashitani_kummer_michalek2017,
	AUTHOR = {Higashitani, Akihiro and Kummer, Mario and Micha\l ek, Mateusz},
	TITLE = {Interlacing {E}hrhart polynomials of reflexive polytopes},
	JOURNAL = {Selecta Math. (N.S.)},
	FJOURNAL = {Selecta Mathematica. New Series},
	VOLUME = {23},
	YEAR = {2017},
	NUMBER = {4},
	PAGES = {2977--2998},
	ISSN = {1022-1824},
	MRCLASS = {52B20 (05C31 12D10 26C10 30C15 33C45 65H04)},
	MRNUMBER = {3703472},
	MRREVIEWER = {Robert Davis},
	DOI = {10.1007/s00029-017-0350-6},
	URL = {https://doi.org/10.1007/s00029-017-0350-6},
}

@book {beck_robins2015,
	AUTHOR = {Beck, Matthias and Robins, Sinai},
	TITLE = {Computing the continuous discretely},
	SERIES = {Undergraduate Texts in Mathematics},
	EDITION = {Second},
	NOTE = {Integer-point enumeration in polyhedra,
	With illustrations by David Austin},
	PUBLISHER = {Springer, New York},
	YEAR = {2015},
	PAGES = {xx+285},
	ISBN = {978-1-4939-2968-9; 978-1-4939-2969-6},
	MRCLASS = {11P21 (05A15 05B15 11-02 11H06 52B05 52B20)},
	MRNUMBER = {3410115},
	DOI = {10.1007/978-1-4939-2969-6},
	URL = {https://doi.org/10.1007/978-1-4939-2969-6},
}

@misc{billey2023cyclotomic,
	title={Cyclotomic generating functions}, 
	author={Sara C. Billey and Joshua P. Swanson},
	year={2023},
	eprint={2305.07620},
	archivePrefix={arXiv},
	primaryClass={math.CO}
}

@article {carlitz_riordan_1965,
	AUTHOR = {Carlitz, L. and Riordan, John},
	TITLE = {Enumeration of certain two-line arrays},
	JOURNAL = {Duke Math. J.},
	FJOURNAL = {Duke Mathematical Journal},
	VOLUME = {32},
	YEAR = {1965},
	PAGES = {529--539},
	ISSN = {0012-7094,1547-7398},
	MRCLASS = {05.10},
	MRNUMBER = {183650},
	MRREVIEWER = {B.\ Gordon},
	URL = {http://projecteuclid.org/euclid.dmj/1077375925},
}

@incollection {andrews_gauss_1991,
	AUTHOR = {Andrews, George E.},
	TITLE = {Partitions and the {G}aussian sum},
	BOOKTITLE = {The mathematical heritage of {C}. {F}. {G}auss},
	PAGES = {35--42},
	PUBLISHER = {World Sci. Publ., River Edge, NJ},
	YEAR = {1991},
	ISBN = {981-02-0201-6},
	MRCLASS = {05A17 (05A19 11P81)},
	MRNUMBER = {1146219},
	MRREVIEWER = {Christian\ Krattenthaler},
}

\end{document}
